      \numberwithin{equation}{section}
\DeclareSymbolFont{cmarrows}{OMS}{cmsy}{m}{n}
\DeclareMathSymbol{\cmminus}{\mathbin}{cmarrows}{"00}
\DeclareMathSymbol{\leftrightarrow}{\mathrel}{cmarrows}{"24}
\DeclareMathSymbol{\leftarrow}{\mathrel}{cmarrows}{"20}
\DeclareMathSymbol{\rightarrow}{\mathrel}{cmarrows}{"21}
   \let\to=\rightarrow
\DeclareMathSymbol{\mapstochar}{\mathrel}{cmarrows}{"37}
   \def\mapsto{\mapstochar\rightarrow}
\DeclareSymbolFont{usualmathcal}{OMS}{cmsy}{m}{n}
\DeclareSymbolFontAlphabet{\mathcal}{usualmathcal}
\DeclareMathAlphabet\BCal{OMS}{cmsy}{b}{n}
\newcommand{\mylabel}[2]{#2\def\@currentlabel{#2}\label{#1}}
\renewcommand{\leq}{\leqslant}
\renewcommand{\geq}{\geqslant}
\definecolor{cornellred}{rgb}{0.7, 0.11, 0.11}
\definecolor{britishracinggreen}{rgb}{0.0, 0.26, 0.15}
\definecolor{cobalt}{rgb}{0.0, 0.28, 0.67}
\newcommand*{\defeq}{\mathrel{\vcenter{\baselineskip0.5ex \lineskiplimit0pt
                     \hbox{\scriptsize.}\hbox{\scriptsize.}}}%
                     =}
\newcommand{\Flag}{\mathrm{Fl}}
\newcommand{\Gr}{\mathrm{Gr}}
\newcommand{\into}{\hookrightarrow}
\newcommand{\onto}{\twoheadrightarrow}
\newcommand{\HH}{\mathrm{H}}
\newcommand{\OO}{\mathscr O}
\newcommand{\boldit}[1]{\boldsymbol{#1}}
\DeclareMathOperator{\rk}{rk}
\DeclareMathOperator{\lHom}{\mathscr{H}\kern-0.3em\mathit{om}}
\DeclareMathOperator{\LQuot}{LQuot}
\DeclareMathOperator{\LHilb}{LHilb}
\DeclareMathOperator{\Hilb}{Hilb}
\DeclareMathOperator{\mdeg}{mdeg}
\DeclareMathOperator{\Sym}{Sym}
\DeclareMathOperator{\Var}{Var}
\DeclareMathOperator{\In}{In}
\DeclareMathOperator{\Hom}{Hom}
\DeclareMathOperator{\Quot}{Quot}
\DeclareMathOperator{\Spec}{Spec}
\DeclareMathOperator{\Supp}{Supp}
\newcommand{\BA}{{\mathbb{A}}}
\newcommand{\BC}{{\mathbb{C}}}
\newcommand{\BG}{{\mathbb{G}}}
\newcommand{\BL}{{\mathbb{L}}}
\newcommand{\BN}{{\mathbb{N}}}
\newcommand{\BP}{{\mathbb{P}}}
\newcommand{\BQ}{{\mathbb{Q}}}
\newcommand{\BZ}{{\mathbb{Z}}}
\newcommand{\CE}{{\mathcal E}}
\newcommand{\CF}{{\mathcal F}}
\newcommand{\CH}{{\mathcal H}}
\newcommand{\CJ}{{\mathcal J}}
\newcommand{\CK}{{\mathcal K}}
\newcommand{\CM}{{\mathcal M}}
\newcommand{\CT}{{\mathcal T}}
\newcommand{\Fm}{{\mathfrak{m}}}
\tikzset{commutative diagrams/.cd,
mysymbol/.style={start anchor=center,end anchor=center,draw=none}}
\tikzset{
shift up/.style={
to path={([yshift=#1]\tikztostart.east) -- ([yshift=#1]\tikztotarget.west) \tikztonodes}
}
}
\theoremstyle{definition}
\newtheorem*{lemma*}{Lemma}
\newtheorem*{theorem*}{Theorem}
\newtheorem*{example*}{Example}
\newtheorem*{fact*}{Fact}
\newtheorem*{notation*}{Notation}
\newtheorem*{definition*}{Definition}
\newtheorem*{prop*}{Proposition}
\newtheorem*{remark*}{Remark}
\newtheorem*{corollary*}{Corollary}
\newtheorem*{conventions*}{Conventions}
\newtheorem{definition}{Definition}[section]
\newtheorem{question}[definition]{Question}
\newtheorem{remark}[definition]{Remark}
\newtheoremstyle{thm} 
        {3mm}
        {3mm}
        {\slshape}
        {0mm}
        {\bfseries}
        {.}
        {1mm}
        {}
\theoremstyle{thm}
\newtheorem{theorem}[definition]{Theorem}
\newtheorem{corollary}[definition]{Corollary}
\newtheorem{lemma}[definition]{Lemma}
\newtheorem{prop}[definition]{Proposition}
\newtheorem{thm}{Theorem}
\newtheorem*{Acknowledgments*}{Acknowledgments}
\newenvironment{proofof}[1]{\par
  \pushQED{\qed}%
  \normalfont \topsep6\p@\@plus6\p@\relax
  \trivlist
  \item[\hskip3\labelsep
        \itshape
    Proof of #1\@addpunct{.}]\ignorespaces
}{%
  \popQED\endtrivlist\@endpefalse
}
\title[Motivic and cohomological stabilisation of the Quot scheme of points]{Motivic and cohomological stabilisation \\ of the Quot scheme of points}
\author{Michele Graffeo, Sergej Monavari, Riccardo Moschetti, Andrea T. Ricolfi}
\keywords{Quot schemes, Motives, Grothendieck ring of varieties, cohomology}
\subjclass[2020]{Primary 14C05; Secondary 14C15.}
\begin{document}
\begin{abstract}
We prove that the motive of the punctual Quot scheme $\Quot^d(\OO^{\oplus r}_{\BA^n})_0$ stabilises, when $n \to \infty$, to $[\Gr(d-1,\infty)]\cdot \sum_{i=0}^{r-1}\BL^{di}$. We similarly show that the Poincaré polynomial of the Quot scheme $ \Quot^d(\OO^{\oplus r}_{\BA^n})$ stabilises and we compute the limit in terms of the infinite Grassmannian. Finally, we prove that the motive of the nested Hilbert scheme stabilises to the motive of the infinite flag variety and we compute the cohomology ring in the limit. These results provide affirmative evidence to a question of Pandharipande concerning the cohomology of Quot schemes on $\BA^\infty$. 
\end{abstract}

\maketitle
{\hypersetup{linkcolor=black}\tableofcontents}

\section{Introduction}
One of the most classical and well-understood moduli spaces in algebraic geometry is the Grassmannian $\Gr(d,n)$, whose study was initiated by Schubert in the 19th century. By Giambelli's theorem, its cohomology ring is generated by the Chern classes of the tautological subbundle $\mathscr S$. All the relations are encoded in the identity $c(\mathscr S)c(\mathscr Q) - 1= 0$, where $\mathscr Q$ is the tautological quotient. Because the relations in $\Gr(d,n)$ start in degree $n-d+1$, they go away when $n \to \infty$, and this yields a ring isomorphism 
\begin{equation}
\label{Grass-stab}
\HH^\ast (\Gr(d,\infty),\BZ) = \varprojlim \HH^\ast (\Gr(d,n),\BZ) \cong \BZ[c_1,\ldots,c_d],
\end{equation}
with $c_i$ of degree $2i$ for $i=1,\ldots,d$. About a century later, Mumford \cite{Mumford1983} initiated the enumerative geometry of the moduli space $\CM_g$ of curves of genus $g$ by making an explicit comparison with the Grassmannian: he introduced the kappa classes $\kappa_i \in \HH^{2i}(\CM_g,\BQ)$ as analogues of the Chern classes of the tautological subbundle on $\Gr(d,n)$. He defined the \emph{tautological ring} $\mathrm{R}^\ast (\CM_g,\BQ) \subset \HH^\ast(\CM_g,\BQ)$ as the subring generated by the kappa classes. Their relations determine the ring structure of $\mathrm{R}^\ast (\CM_g,\BQ)$. Once again, they go away when $g \to \infty$, in the sense that
\begin{equation}
\label{Mg-stab}
\varprojlim \mathrm{R}^\ast (\CM_g,\BQ) = \varprojlim \HH^\ast (\CM_g,\BQ) \cong \BZ[\kappa_1,\kappa_2,\kappa_3,\ldots].
\end{equation}
This was Mumford's conjecture, proved by Madsen--Weiss in \cite{Madsen-Weiss}.
The behaviour described in \eqref{Grass-stab} and \eqref{Mg-stab} is known as \emph{stabilisation of cohomology}. This paper studies precisely this phenomenon in the case where the finite-level moduli space of interest is the \emph{Quot scheme of points}
\begin{equation}
\label{quot}
\Quot^d(\OO_{\BA^n}^{\oplus r}),
\end{equation}
whose motivic theory has recently attracted a lot of attention \cite{MR_nested_Quot,mozgovoy2019motivic,MOTIVEMOTIVEMOTIVE,FGLR,monavari2024hyperquot,double-nested-1} (see also \cite{DavisonR,cazzaniga2020higher,Virtual_Quot} for computations of \emph{virtual} motivic invariants of Quot schemes).
This time, the moduli space \eqref{quot} is highly singular and not well-understood from a geometric point of view, let alone cohomological. 
We were initially inspired by the following question, which according to the recent paper \cite{Pielasa} was raised by Pandharipande.

\begin{question}
\label{panda-conjecture}
Fix integers $d,r\geq 1$. Is it true that there is a graded ring isomorphism
\[
\begin{tikzcd}
\HH^\ast(\Quot^d(\OO^{\oplus r}_{\BA^\infty}), \BZ)\arrow{r}{\sim} & \BZ[c_1,\ldots,c_{d}]/(c_d^r),
\end{tikzcd}
\]
where $\deg c_i=2i$ for each $i=1, \ldots, d$?
\end{question}

Here, $\Quot^d(\OO^{\oplus r}_{\BA^\infty})$ is the colimit $\varinjlim \Quot^d(\OO^{\oplus r}_{\BA^n})$ of the spaces \eqref{quot}, viewed as an ind-scheme. \Cref{panda-conjecture} has affirmative answer in the case $r=1$ (the Hilbert scheme case), as a consequence of the work \cite{Hilb^infinity}, which exploits $\BA^1$-homotopy techniques. This also follows from \Cref{thm:flag-intro}\ref{coh}. In the paper \cite{Pielasa} it is shown that the two algebras involved in \Cref{panda-conjecture} have the same Poincar\'e series for $d\leq 2$. In this paper we extend this result to arbitrary $d$, as a consequence of \Cref{thm:main-B}.

We attack the stabilisation of cohomology via motivic techniques, partially building upon the methods we introduced in \cite{MOTIVEMOTIVEMOTIVE}. Note that the ring $\BZ[c_1,\ldots,c_{d}]/(c_d^r)$ can be written as $\HH^\ast (\Gr(d-1,\infty),\BZ) \otimes_{\BZ}\BZ[c_d] / c_d^r$, with $c_d$ (somewhat crucially!) in degree $2d$. Our first main result is the motivic counterpart of this decomposition.

\begin{thm}[\Cref{main-A}]
\label{thm:main-intro-A}
Fix integers $d,r\geq 1$. There is an identity
    \begin{align*}
        [\Quot^d(\OO^{\oplus r}_{\BA^\infty})_0]=[\Gr(d-1, \infty)]\cdot \sum_{i=0}^{r-1} \BL^{di}\in \widehat{K_0}(\Var_\BC).
    \end{align*}
\end{thm}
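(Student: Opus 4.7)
The plan is to establish the identity at each finite $n$ and then pass to the limit in the completion $\widehat{K_0}(\Var_\BC)$. As a first step, I would produce an explicit motivic formula for $[\Quot^d(\OO^{\oplus r}_{\BA^n})_0]$ via the scaling torus $(\BC^\ast)^n$ on $\BA^n$ together with the natural $(\BC^\ast)^r$-action on $\OO^{\oplus r}$, and the associated Bialynicki--Birula decomposition. The fixed points of the combined torus are $r$-tuples of $\Fm$-primary monomial ideals $(I_1,\ldots,I_r)$ with $\sum_k\colen(I_k)=d$. Each such tuple uses at most $d$ of the $n$ variables, so the set of isomorphism classes of fixed points stabilises as $n\to\infty$; the actual count of fixed points grows polynomially in $n$ and organises into Grassmannian-type contributions.

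Writing $[\Quot^d(\OO^{\oplus r}_{\BA^n})_0]=\sum_{I_\bullet}\BL^{\dim C(I_\bullet)}$, the dependence on $n$ sits in the cell dimensions and in the combinatorics of embedding the monomial data into $\BA^n$. I would isolate this dependence by stratifying the fixed points according to the isomorphism class of the underlying quotient module $Q$ and its embedding dimension $e=\mathrm{edim}(Q)\le d-1$: for each iso class, the choice of how the support of $Q$ is embedded into $\BA^n$ contributes a factor $[\Gr(e,n)]$. In $\widehat{K_0}(\Var_\BC)$ these converge to $[\Gr(e,\infty)]$ as $n\to\infty$, and strata with $e<d-1$ are subdominant in the completion. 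The surviving ``geometric'' factor is therefore $[\Gr(d-1,\infty)]$; this already settles the $r=1$ (Hilbert scheme) case, compatible with the $\BA^1$-homotopy argument of \cite{Hilb^infinity}.

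The remaining, $n$-independent factor should match $\sum_{i=0}^{r-1}\BL^{di}$. I would obtain it by organising the Bialynicki--Birula cells along the $(\BC^\ast)^r$-weights of $\OO^{\oplus r}$: each of the $r$ sectors should contribute a canonical term weighted by $\BL^{di}$, where the exponent $di$ reflects the length-$d$ quotient paired with the coordinate index $i$. This is the natural ``$\BL^d$-deformation'' of $[\BP^{r-1}]$, and a direct computation restricted to the $(\BC^\ast)^r$-fixed part of each stratum should reproduce it.

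The principal obstacle is the careful bookkeeping needed to verify that the strata of varying embedding dimension $e$ and of varying $(\BC^\ast)^r$-sector do recombine, in the $\widehat{K_0}(\Var_\BC)$-limit, into the clean product $[\Gr(d-1,\infty)]\cdot\sum_{i=0}^{r-1}\BL^{di}$. In particular, one must uniformly control the lower-$e$ ``error'' strata in the completion and confirm that the Grassmannian and non-Grassmannian contributions factorise across the limit; the expected orthogonality between the geometric embedding data and the $(\BC^\ast)^r$-weight data is what makes the final answer split as a product rather than a more intricate expression.
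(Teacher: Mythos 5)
Your opening step assumes the combined $(\BC^\ast)^n\times(\BC^\ast)^r$-action yields an affine cell decomposition, writing $[\Quot^d(\OO^{\oplus r}_{\BA^n})_0]=\sum_{I_\bullet}\BL^{\dim C(I_\bullet)}$. This is false: the punctual Quot scheme is singular in general (already $\Hilb^4(\BA^3)_0$ is, and the situation worsens for $r\geq 2$), so the classical Bia{\l}ynicki-Birula theorem does not apply, and the attracting sets of the isolated torus-fixed points need not be affine spaces of any well-defined dimension. This is precisely why the paper replaces the classical theory by the generalised BB decomposition of Jelisiejew--Sienkiewicz, which only produces a bijection on closed points, not a cell structure, and correspondingly only computes the motives of the BB strata in the cases where they happen to be smooth. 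The mechanism that makes the limit tractable, and which is missing from your proposal, is \Cref{prop: non linear pieces}: every Hilbert--Samuel stratum of length $\geq 2$ has motive divisible by $\BL^{(r-h(0)+h(0)n-h(1))h(t)}$, a power growing linearly in $n$, so those strata go to zero in $\widehat{K_0}(\Var_\BC)$ as $n\to\infty$. What survives is the \emph{linear locus}, whose strata $H_{(d-i,i)}$ are identified in \Cref{prop: linear locus} as vector bundles over Grassmann bundles over Grassmannians, giving $[\LQuot^d(\OO_{\BA^n}^{\oplus r})]=\sum_{i=1}^d[\Gr(i,r)][\Gr(d-i,in)]\BL^{(d-i)(r-i)}$.

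Two further points in your sketch do not hold up. First, it is not true that only the maximal-embedding-dimension stratum survives in the completion: \emph{all} linear strata $i=1,\ldots,d$ contribute to the limit, and the Grassmannian that appears in the $i$th one is $\Gr(d-i,in)$, not $\Gr(e,n)$. Second, the product form $[\Gr(d-1,\infty)]\cdot\sum_{i=0}^{r-1}\BL^{di}$ does not follow from any claimed orthogonality between embedding data and $(\BC^\ast)^r$-weight data; the raw limit of the linear motives is $\sum_{i=1}^d[\Gr(i,r)][\Gr(d-i,\infty)]\BL^{(d-i)(r-i)}$, and resolving this into the factorised answer requires the nontrivial telescoping identity of \Cref{lemma: prelim}. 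Your proposal supplies no replacement for either of these steps, so as written it would not close.
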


In the statement, $\widehat{K_0}(\Var_\BC)$ denotes the $\BL$-adic completion of the Grothendieck ring of varieties $K_0(\Var_\BC)$, and $\Quot^d(\OO^{\oplus r}_{\BA^\infty})_0 \subset \Quot^d(\OO^{\oplus r}_{\BA^\infty})$ is the ind-scheme defined as the colimit $\varinjlim \Quot^d(\OO^{\oplus r}_{\BA^n})_0$, where  $\Quot^d(\OO^{\oplus r}_{\BA^n})_0 \subset \Quot^d(\OO^{\oplus r}_{\BA^n})$ is the \emph{punctual Quot scheme}. Note that, as we prove in \Cref{lemma:infinite-grass-motive}, the motive of the ind-Grassmannian $\Gr(d-1,\infty) = \varinjlim \Gr(d-1,n)$ admits the explicit expression
\[
[\Gr(d-1, \infty)] = \prod_{k=1}^{d-1}\frac{1}{1-\BL^k},
\]
therefore the formula in \Cref{thm:main-intro-A} is fully explicit.

A key character for our argument is the \emph{linear Quot scheme}, namely the closed subset
\[
\LQuot^d(\OO_{\BA^n}^{\oplus r}) \subset \Quot^d(\OO_{\BA^n}^{\oplus r})_0
\]
parametrising quotients $[\OO_{\BA^n}^{\oplus r} \onto T]$ supported at the origin such that the Hilbert--Samuel function of $T$ has length $0$ or $1$. The main observation is that \emph{only} the Hilbert--Samuel strata building up $\LQuot^d(\OO_{\BA^n}^{\oplus r})$ `survive' motivically in the limit (cf.~\Cref{prop: non linear pieces}). Moreover, they are smooth quasiprojective varieties of a very explicit form: in \Cref{prop: linear locus} we show that they are vector bundles over certain (explicit) Grassmann bundles over a Grassmannian, and this makes their motive easily accessible. Summing up the contributions of such linear strata, we obtain in \Cref{eqn: motive linear quot} the key identity
\[
[\LQuot^d(\OO_{\BA^n}^{\oplus r})]=\sum_{i=1}^d\,[\Gr(i,r)][\Gr(d-i,in)]\BL^{(d-i)(r-i)},
\]
whose limit for $n \to \infty$ gives, after some work, the formula in \Cref{thm:main-intro-A}. 
 It is remarkable that, despite  having a very explicit formula for the motive of $\LQuot^d(\OO_{\BA^n}^{\oplus r})$, its geometry shares many pathologies with the Quot scheme $ \Quot^d(\OO_{\BA^n}^{\oplus r})$. For instance, already for $n=d=4$ and $r=2$, one can check that the elementary component of $\Quot^4(\OO_{\BA^4}^{\oplus 2})$, see \cite{Joachim-Sivic}, can also be found as an irreducible component of $\LQuot^4(\OO_{\BA^4}^{\oplus 2})$.  This component corresponds to the Hilbert--Samuel function $(2,2)$.

In \Cref{sec:cohomology} we prove the following cohomological results, in which $P(Y,z)$ denotes the Poincar\'e series of a space $Y$.

\begin{thm}[\Cref{thm:purity}, \Cref{cor:poincare}]
\label{thm:main-B}
Fix integers $d,r,n \geq 1$. 
\begin{itemize}
\item [\mylabel{purity}{\normalfont(1)}] Deligne's mixed Hodge structure on the cohomology $\HH^k(\LQuot^d(\OO_{\BA^n}^{\oplus r}), \BQ)$ is pure of Tate type for all $k$. The integral cohomology $\HH^k(\LQuot^d(\OO_{\BA^n}^{\oplus r}), \BZ)$ is torsion-free. In particular, there is an identity
\[
P( \LQuot^d(\OO_{\BA^n}^{\oplus r}), z)=\sum_{i=1}^dP(\Gr(i,r), z)P(\Gr(d-i,in), z)z^{2(d-i)(r-i)}.
\]
\item [\mylabel{poincare}{\normalfont(2)}]   The restriction  
\[
\begin{tikzcd}
    \HH^*\left(\Quot^d(\OO^{\oplus r}_{\BA^n}), \BZ\right)\arrow[r] &\HH^*\left(\LQuot^d(\OO^{\oplus r}_{\BA^n}), \BZ\right)
\end{tikzcd}
\]
is an isomorphism in degrees at most $2\left( n+r-d\right)$. In particular, 
there is a graded isomorphism of cohomology rings 
\[
\HH^*\left(\Quot^d(\OO^{\oplus r}_{\BA^\infty}), \BZ\right)\cong \HH^*\left(\LQuot^d(\OO^{\oplus r}_{\BA^\infty}), \BZ\right),
\]
and an identity
\[
P(\Quot^d(\OO^{\oplus r}_{\BA^\infty}), z) = \left(\prod_{k=1}^{d-1}\frac{1}{1-z^{2k}}\right)\cdot \sum_{i=0}^{r-1}z^{2di}.
\]
\end{itemize}
\end{thm}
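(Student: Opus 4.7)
I would prove Part (1) via the Hilbert--Samuel stratification of $\LQuot^d(\OO_{\BA^n}^{\oplus r})$, and then reduce Part (2) to a codimension estimate on the complement $\Quot^d \setminus \LQuot^d$.

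For Part (1), the first step is to decompose $\LQuot^d(\OO_{\BA^n}^{\oplus r})$ into the $d$ locally closed Hilbert--Samuel strata $S_i$, $1 \leq i \leq d$, parametrising quotients with Hilbert--Samuel function $(i, d-i)$. By the proposition describing the linear locus (recalled in the introduction), each $S_i$ is a Zariski-locally trivial vector bundle of rank $(d-i)(r-i)$ over a Grassmann bundle with fibre $\Gr(d-i, in)$ over $\Gr(i, r)$. Such iterated Grassmann and vector bundles over a Grassmannian carry affine pavings (refining the Schubert stratification), so each $S_i$ has free integral cohomology of Tate type concentrated in even degrees, with compactly supported Poincaré polynomial $P(\Gr(i,r),z)\,P(\Gr(d-i,in),z)\,z^{2(d-i)(r-i)}$. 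Ordering the strata compatibly with closure, I would induct along the resulting filtration $Z_0 \subset \cdots \subset Z_d = \LQuot^d$: the long exact sequence in compactly supported cohomology for each closed/open pair $(Z_j, Z_{j-1}, S_j)$ degenerates into short exact sequences because $\HH^*_c(S_j)$ and $\HH^*_c(Z_{j-1})$ are concentrated in even degrees, forcing the connecting maps to vanish for parity reasons. This propagates purity, torsion-freeness, and additivity of Poincaré polynomials; summing over $i$ yields the formula in Part (1).

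For Part (2), set $U_n = \Quot^d(\OO^{\oplus r}_{\BA^n}) \setminus \LQuot^d(\OO^{\oplus r}_{\BA^n})$. The key input is the earlier proposition on non-linear pieces, which provides that every locally closed stratum of $U_n$ (whether indexed by a Hilbert--Samuel function with support beyond degree one, or by a non-trivial support type) has motive divisible by $\BL^{n+r-d+1}$ in $K_0(\Var_\BC)$. Refining the argument of Part (1), I verify that each such stratum is smooth with pure Tate cohomology, so motivic divisibility by $\BL^{n+r-d+1}$ translates via Poincaré duality on each stratum into the vanishing of its compactly supported cohomology in degrees $<2(n+r-d+1)$. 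Assembling these vanishings via the long exact sequences for the stratification yields $\HH^k(\Quot^d, \LQuot^d;\BZ) = 0$ for $k \leq 2(n+r-d)$, giving the claimed isomorphism at finite level. Passing to the colimit $n \to \infty$ and using $\HH^*(\varinjlim X_n) = \varprojlim \HH^*(X_n)$ yields the ring isomorphism, and the closed form for $P(\Quot^d(\OO^{\oplus r}_{\BA^\infty}), z)$ is read off from the realisation $\BL \mapsto z^2$ of \Cref{thm:main-intro-A}.

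The main obstacle is the cohomological translation in Part (2): since $\Quot^d(\OO^{\oplus r}_{\BA^n})$ is highly singular, Poincaré duality is not available on the total space, so the degree bound on the relative cohomology must be assembled stratum by stratum on $U_n$. The delicate technical point is to choose the stratification of $U_n$ so that each stratum is individually smooth with pure Tate cohomology and so that its motive individually carries the factor $\BL^{n+r-d+1}$, so that the motivic estimate lifts to a clean cohomological vanishing through the assembly long exact sequences.
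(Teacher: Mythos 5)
Part (1) of your proposal tracks the paper's argument closely: decompose $\LQuot^d$ by Hilbert--Samuel type $(i,d-i)$, use the Grassmann-bundle description of each stratum $H_{(i,d-i)}$ to obtain an affine paving (equivalently, even-degree compactly supported cohomology), and assemble purity, torsion-freeness, and the Poincar\'e polynomial by working along the closure filtration. One small omission is the verification that the closure ordering actually yields a filtration by \emph{closed} subschemes; the paper supplies this by upper semicontinuity of $h_z(0)$ (the minimal number of generators), which makes each $L_i=\{z : h_z(0)\geq d-i\}$ closed.

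Your Part (2) has two genuine gaps. First, you set $U_n=\Quot^d(\OO^{\oplus r}_{\BA^n})\setminus\LQuot^d(\OO^{\oplus r}_{\BA^n})$; but $\Quot^d$ is non-compact, and this $U_n$ contains all quotients supported away from the origin. \Cref{prop: non linear pieces} only controls the punctual Hilbert--Samuel strata, and the non-punctual locus is emphatically \emph{not} $\BL$-divisible to high order (already for $d=1$ it contributes $(\BL^n-1)[\BP^{r-1}]$, which is a unit modulo $\BL$). The paper first reduces to the \emph{punctual} Quot scheme via the Totaro-type homotopy equivalence $\Quot^d(\OO^{\oplus r}_{\BA^n})_0\hookrightarrow\Quot^d(\OO^{\oplus r}_{\BA^n})$ (\Cref{prop: totaro}), and only then sets $U_n=\Quot^d_0\setminus\LQuot^d$, with both ends of the closed pair compact so that the compactly supported long exact sequence directly compares ordinary cohomologies. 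Second, your mechanism for converting the motivic $\BL$-divisibility into cohomological vanishing -- asserting that each nonlinear stratum is smooth with pure Tate cohomology and then invoking Poincar\'e duality stratum by stratum -- does not go through: the strata $H_h$ with $\ell(h)\geq 2$ are not shown (nor can they in general be expected) to be smooth, and \Cref{prop: non linear pieces} is deliberately phrased to avoid this, supplying only a \emph{bijective morphism} $W_K\times\BA^N\to U_K$ onto an open piece of the stratum, with $W_K$ an arbitrary scheme. The smoothness-free argument that actually works is K\"unneth for compactly supported cohomology: $\HH^i_c(W_K\times\BA^N,\BZ)\cong\HH^{i-2N}_c(W_K,\BZ)=0$ for $i<2N$, with $N\geq n+r-d+1$, regardless of the singularities of $W_K$; assembling this vanishing over the stratification of $U_n$ gives $\HH^i_c(U_n,\BZ)=0$ for $i<2(n+r-d+1)$, which is the estimate the LES needs.
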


It follows that, as mentioned earlier, the algebras in \Cref{panda-conjecture} have the same Poincar\'e series, providing more evidence in support of a positive answer. 

\medskip
Finally, in \Cref{sec:nested-hilb} we consider the (infinite) nested Hilbert scheme and show it stabilises, motivically and cohomologically, to the (infinite) flag variety.

\begin{thm}[\Cref{thm:nested},\,\Cref{cor:nested-cohomology}]
\label{thm:flag-intro}
Fix integers $\ell\geq 1$ and $0\leq d_1\leq \cdots \leq d_\ell \leq n$. 
\begin{itemize}
\item [\mylabel{stab}{\normalfont{(1)}}] There is an identity
\[
[\Hilb^{d_1+1,\ldots,d_\ell+1}(\BA^n)_0] =[\Flag(d_1,\ldots, d_\ell,n)] \in K_0(\Var_\BC)/\BL^{n-d_\ell+1}K_0(\Var_\BC).
\]
In particular, the  sequence of motives $([\Hilb^{d_1+1,\ldots,d_\ell+1}(\BA^n)_0])_n$ converges in $ \widehat{K}_0(\Var_\BC)$ and in $\widehat{K}_0(\Var_\BC)\llbracket t \rrbracket$ we have the identity
\begin{align*}
\sum_{0\leq {d_1\leq\cdots\leq d_\ell}}\,[\Hilb^{d_1+1,\ldots,d_\ell+1}(\BA^\infty)_0]t_1^{d_1}\cdots t_\ell^{d_\ell} &=\prod_{j=0}^{\ell-1}\prod_{i\geq 0}\frac{1}{1-\BL^it_{j+1}t_{j+2}\cdots t_\ell}. 
\end{align*} 
\item [\mylabel{coh}{\normalfont{(2)}}] For each integer $i\leq 2(n-d_\ell+1)-2$ there is an isomorphism
\[
\HH^i (\Hilb^{d_1+1,\ldots,d_\ell+1}(\BA^n),\BZ) \cong \HH^i (\Flag(d_1,\ldots,d_\ell,n),\BZ).
\]
In particular, there is an isomorphism  of graded rings
\[
\HH^\ast (\Hilb^{d_1+1,\ldots,d_\ell+1}(\BA^\infty),\BZ) \cong \BZ\left[c^{(j)}_1,\ldots,c^{(j)}_{d_{j+1}-d_j}\,\big|\,0\leq j\leq \ell-1\right]
\]
where $\deg c_k^{(j)} = 2k$ for all $k$.
\end{itemize}
\end{thm}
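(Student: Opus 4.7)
The plan is to mirror the proof of \Cref{thm:main-intro-A}, using now the projection to the largest nested piece
\[
\pi\colon \Hilb^{d_1+1,\ldots,d_\ell+1}(\BA^n)_0 \longrightarrow \Hilb^{d_\ell+1}(\BA^n)_0.
\]
A chain $I_1\supseteq\cdots\supseteq I_\ell$ has every $I_j$ sitting between $\Fm^2$ and $\Fm$ if and only if its smallest member $I_\ell$ does, since $\Fm^2\subseteq I_\ell\subseteq I_j$ is then automatic. Therefore $\pi^{-1}(\LHilb^{d_\ell+1}(\BA^n))$ is parametrised by chains of subspaces
\[
I_\ell/\Fm^2 \subseteq \cdots \subseteq I_1/\Fm^2 \subseteq \Fm/\Fm^2\cong\BC^n, \qquad \dim I_j/\Fm^2=n-d_j,
\]
which, up to duality of partial flag varieties, is exactly $\Flag(d_1,\ldots,d_\ell,n)$; equivalently, it is the Zariski-locally trivial flag bundle over $\Gr(d_\ell,n)$ with fibre $\Flag(d_1,\ldots,d_{\ell-1},d_\ell)$.

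\textbf{Bounding the non-linear locus.} I would then stratify the complement by Hilbert--Samuel function. The $r=1$ case of \Cref{thm:main-intro-A} (more precisely, its proof via the explicit formula for the linear Quot schemes together with the vanishing for the non-linear HS strata) shows that the class of each non-linear HS stratum of $\Hilb^{d_\ell+1}(\BA^n)_0$ already lives in $\BL^{n-d_\ell+1}K_0(\Var_\BC)$. The fibres of $\pi$ over such a stratum parametrise nested ideals inside the finite-length fat point $\Spec(\OO/I_\ell)$; combining the natural $\BG_m$-dilation on $\BA^n$ with a constructible refinement, these fibres assemble into a piecewise Zariski-locally trivial family, so multiplicativity of motives on locally trivial fibrations preserves the divisibility. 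Summing over strata yields
\[
[\Hilb^{d_1+1,\ldots,d_\ell+1}(\BA^n)_0]\equiv [\Flag(d_1,\ldots,d_\ell,n)] \pmod{\BL^{n-d_\ell+1}K_0(\Var_\BC)}.
\]

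\textbf{Generating series and cohomology.} Letting $n\to\infty$ in the $\BL$-adic topology identifies the motivic limit with $[\Flag(d_1,\ldots,d_\ell,\infty)] = \prod_{j=0}^{\ell-1}[\Gr(d_{j+1}-d_j,\infty)]$ (with $d_0=0$), via the iterated Grassmann-bundle structure of $\Flag$. Substituting $e_{j+1}=d_{j+1}-d_j$ and using $t_1^{d_1}\cdots t_\ell^{d_\ell}=\prod_{j=0}^{\ell-1}(t_{j+1}\cdots t_\ell)^{e_{j+1}}$, the generating series factors, and the $q$-Euler identity $\sum_{e\geq 0}[\Gr(e,\infty)]u^e=\prod_{i\geq 0}(1-\BL^i u)^{-1}$ applied with $u=t_{j+1}\cdots t_\ell$ gives the stated product formula. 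For part \ref{coh}, I would replay the argument of \Cref{thm:main-B}: the HS strata of the nested Hilbert scheme are Grassmann-bundle fibrations, hence have pure Tate torsion-free cohomology, and the codimension bound $\geq n-d_\ell+1$ translates via Gysin sequences into the stated degree bound on the restriction map to the linear nested locus. Passing to the colimit reduces the problem to computing $\HH^\ast(\Flag(d_1,\ldots,d_\ell,\infty),\BZ)$, which by the standard stable Schubert/Chern-class calculation is the polynomial ring generated by the Chern classes $c_k^{(j)}$ of the tautological sub-quotients $\CV_{j+1}/\CV_j$ of ranks $d_{j+1}-d_j$.

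\textbf{Main obstacle.} The delicate step is the non-linear bound: divisibility by $\BL^{n-d_\ell+1}$ of the base motive alone does not a priori propagate to $\pi^{-1}$, so one must construct an explicit (at least piecewise) local trivialisation of $\pi$ over each HS stratum rather than settle for a crude dimension estimate. The $\BG_m$-equivariance of the whole picture and the explicit form of the HS strata provided by the proof of \Cref{thm:main-intro-A} should make this workable.
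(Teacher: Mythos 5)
Your identification of the nested linear locus with the partial flag variety is the same as the paper's (\Cref{iso-flag-linearlocus}), and your generating-series and cohomology arguments (the $q$-Euler product after the substitution $e_{j+1}=d_{j+1}-d_j$, the Gysin/degree-bound argument, and the stable Schubert calculus for $\Flag(\infty)$) match what the paper does. The divergence, and the gap, is in how you bound the non-linear locus.

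The paper does not route through the projection $\pi\colon \Hilb^{d_1+1,\ldots,d_\ell+1}(\BA^n)_0 \to \Hilb^{d_\ell+1}(\BA^n)_0$. Instead it extends \Cref{prop: non linear pieces} \emph{directly to the nested setting}: for each non-linear nested Hilbert--Samuel stratum it constructs a bijective morphism $W_K\times \BA^{(n-h_\ell(1))h_\ell(t)}\to U_K$ by the same standard-basis deformation $\psi_\mu$ (applied to the largest ideal in the chain, which moves all nested ideals along with it), and the divisibility $\BL^{n-d_\ell+1}\mid [H_{\boldit h}]$ follows stratum by stratum exactly as in rank one. That is self-contained and avoids any claim about the structure of $\pi$ over the non-linear locus.

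Your route instead asks that $\pi$, restricted to its non-linear HS strata of the base, be (at least piecewise) Zariski-locally trivial so that divisibility of the base class by $\BL^{n-d_\ell+1}$ propagates to the preimage. You correctly flag this as ``the delicate step,'' but the proposal does not supply it, and it is not automatic: the fibre of $\pi$ over a point $[Z_\ell]$ is a nested Hilbert scheme of the fat point $Z_\ell$, which varies with $[Z_\ell]$ in a way that is constructible but not obviously a piecewise-trivial family, and $\BG_m$-equivariance of $\pi$ alone does not produce the required local trivialisations. Until that is established, the key inclusion $[\pi^{-1}(\text{non-linear locus})]\in \BL^{n-d_\ell+1}K_0(\Var_\BC)$ is not proved. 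The paper's direct extension of \Cref{prop: non linear pieces} is the cleaner way to close this step; I would recommend replacing the $\pi$-propagation argument by that extension.
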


The case $\ell=1$ of \Cref{thm:flag-intro}\ref{coh} can also be proved as a consequence of the work \cite{Hilb^infinity}. Our proof does not involve any $\BA^1$-homotopy techniques as in \cite{Hilb^infinity}.

\begin{conventions*}
We work over the field $\BC$ of complex numbers. By a \emph{stratification} of a scheme $Y$ we mean a finite collection of locally closed subschemes $Z_1,\ldots,Z_p \subset Y$ such that the induced morphism $\coprod_{1\leq i\leq p} Z_i \to Y$ is bijective.
\end{conventions*}

\subsection*{Acknowledgments} We thank Camilla Felisetti and Emanuele Pavia for useful discussions. S.M. is particularly grateful to Kamil Rychlewicz and Dimitri Wyss for many discussions on the stabilisation of mixed Hodge structures.

S.M. is supported by  the HORIZON-MSCA-2024-PF-01 Project 
101203281 ``Moduli Spaces of Sheaves: Geometry and Invariants'', funded by the Research and Innovation framework programme Horizon Europe {\normalsize\euflag}.  All authors are   members of GNSAGA of INDAM, Italy.

\section{Quot schemes and Hilbert--Samuel strata}

\subsection{Infinite Quot schemes and Grassmannians}
Let $n,r,d\geq 1$ be integers.
Let $X$ be an $n$-dimensional smooth quasiprojective variety, $\CE$ a locally free  $\OO_X$-module of rank $r$. The \emph{Quot scheme of points} $\Quot^d(\CE) $ is the fine moduli space parametrising isomorphism classes of quotients $[\CE\onto T]$, where $T$ is a 0-dimensional coherent sheaf on $X$ such that $\chi(T)=d$. Two quotients are identified when their kernels coincide.

Given a closed point $p\in X$, one defines the \emph{punctual Quot scheme} to be the closed subscheme 
\[ 
\begin{tikzcd}
\Quot^d (\CE)_p\arrow[hook]{r} &  \Quot^d (\CE)
\end{tikzcd}
\]
consisting of quotients $[\CE\onto T]$ such that $T$ is entirely supported at $p$. This scheme is projective and only depends on $\widehat{\OO}_{X,p}$, see e.g.~\cite[Thm.~4.3]{Fantechi-Ricolfi-motivic} or \cite[\S 2.1]{ricolfi2019motive}.

Let now $(X, p)=(\BA^n, 0)$. Any closed embedding $\BA^n\hookrightarrow \BA^{n+1}$ naturally induces compatible closed embeddings
\[
\begin{tikzcd}
\Quot^d(\OO^{\oplus r}_{\BA^n})\arrow[hook]{r} & \Quot^d(\OO^{\oplus r}_{\BA^{n+1}}),\\   
\Quot^d(\OO^{\oplus r}_{\BA^n})_0 \arrow[hook]{r}\arrow[hook]{u} & \Quot^d(\OO^{\oplus r}_{\BA^{n+1}})_0\arrow[hook]{u}
\end{tikzcd}
\]
of the corresponding moduli spaces. We define the \emph{(punctual) Quot scheme of the infinite affine space} to be the colimit of the (punctual) Quot schemes
\begin{align*}
      \Quot^d(\OO^{\oplus r}_{\BA^\infty}) &\defeq \displaystyle\varinjlim   \Quot^d(\OO_{\BA^n}^{\oplus r}),\\
      \Quot^d(\OO^{\oplus r}_{\BA^\infty})_0 &\defeq\displaystyle\varinjlim   \Quot^d(\OO^{\oplus r}_{\BA^n})_0
\end{align*}
as ind-schemes. By the compatibility at finite levels, there is a canonical closed immersion of ind-schemes
\[
\begin{tikzcd}
\Quot^d(\OO^{\oplus r}_{\BA^\infty})_0
\arrow[hook]{r} & 
\Quot^d(\OO^{\oplus r}_{\BA^\infty}).
\end{tikzcd}
\]
 
Analogously, let $\Gr(d,n)$ be the Grassmaniann of $d$-dimensional linear subspaces of $\BC^n$. The direct limit along the natural closed immersions $\Gr(d,n) \into \Gr(d,n+1)$ defines the \emph{infinite Grassmannian} 
\[
\Gr(d,\infty) \defeq \varinjlim \Gr(d,n).
\]

\subsection{Hilbert--Samuel stratification}
We recall now the notion of  Hilbert--Samuel function,  a classical invariant attached to    graded modules, see e.g~\cite[\S5.1]{EISENBUD} for a more complete exposition.

Consider the polynomial ring $R=\BC[x_1,\ldots,x_n]$ with the standard grading $\deg(x_k)=1$, for $k=1,\ldots,n$. Throughout, we denote by $\mathfrak m\subset R$ the maximal ideal of the origin $0 \in \BA^n$.  
Let $M=\bigoplus_{i\in\BZ} M_i$ be a finite graded $R$-module. The \emph{Hilbert--Samuel function} $h_M$ attached to $M$ is the function $i \mapsto h_M(i) =\dim_{\BC} M_i$, for every $i\in \BZ$. 

\begin{definition}
Let $z=[\OO_{\BA^n}^{\oplus r}\onto T]\in  \Quot^d(\OO^{\oplus r}_{\BA^n})_0 $ be a closed point corresponding to a quotient supported at the origin $0\in \BA^n$. Set $\mathfrak m_T=\mathfrak m\otimes_R T$. We associate to $z$ the finite graded $R$-module 
\begin{align*}
    \mathsf{gr}(z) =\bigoplus_{i\geq 0} \mathfrak m_T^{i}/\mathfrak m_T^{i+1}.
\end{align*}
We define the \emph{Hilbert--Samuel function } of $z$ to be $h_z=h_{ \mathsf{gr}(z)}$.
\end{definition}
 
We have the following properties:
\begin{enumerate}
    \item The Hilbert--Samuel function $h_z$ has finite support, that is, it vanishes for almost all values. We will thus represent $h_z=(h_z(0),h_z(1), \ldots, h_z(t))$ as a finite string of integers, where $t$ is the largest index such that $h_z(t)\neq 0$. Such integer $t$ will be denoted by $\ell(h_z)$ and called the \emph{length} of the Hilbert--Samuel function $h_z$.
    \item We have that  $h_z(i)<r\cdot h_{R}(i)$. Note that $h_z(0)$ coincides with the minimal number of generators of the $R$-module $T$.
    \item The length $\chi(T)=d$ can be computed as the finite sum 
    $\lvert h_z\rvert = \sum_{i\geq 0}h_z(i)$.
\end{enumerate} 

\begin{definition}
 Given a function $h\colon\BZ\rightarrow \BN$ with finite support, and  integers $n,r\geq 1$, the \emph{Hilbert--Samuel stratum} $H_{h}^{n,r}$  is the (possibly empty) locally closed subset
\[
H_{h}^{n,r}=\Set{z\in \Quot^{\lvert h\rvert}(\OO^{\oplus r}_{\BA^n})_{0} | h_z=h}\subset \Quot^{\lvert h \rvert}(\OO^{\oplus r}_{\BA^n})_{0}.
\]
We endow $H_{h}^{n,r}$ with the reduced induced subscheme structure. When $n$ and $r$ are clear from context, we simply write $H_h$ to simplify the notation.
\end{definition}

The \emph{Hilbert--Samuel stratification} of the punctual Quot scheme  $\Quot^{d}(\OO^{\oplus r}_{\BA^n})_{0}$ is
\begin{equation}\label{HSstratification}
    \Quot^{d}(\OO^{\oplus r}_{\BA^n})_{0}=\coprod_{\substack{\lvert h\rvert=d}}H^{n,r}_{h},
\end{equation}
where the coproduct runs over all the Hilbert--Samuel functions $h$ such that $\lvert h\rvert = d$.

\subsection{Generalised Białynicki-Birula decomposition}
The Quot scheme $\Quot^{d}(\OO^{\oplus r}_{\BA^n})$ is almost never smooth, but we can still apply to it the modern version of the  Białynicki-Birula decomposition \cite{JJ_dec}, which generalises the classical theory of \cite{BBdecomposition}. We follow the presentation of \cite[Sec.~5]{Joachim-Sivic}, see also \cite[Sec.~3]{ELEMENTARY} for the case of Hilbert schemes.

Set $Q \defeq \Quot^{d}(\OO^{\oplus r}_{\BA^n})$ to ease notation. The diagonal $\BG_{\mathrm{m}}$-action on $\BA^n$ naturally lifts to a $\BG_{\mathrm{m}}$-action on the Quot scheme $Q$ and on the punctual Quot scheme $Q_0 \defeq \Quot^{d}(\OO^{\oplus r}_{\BA^n})_0$. By \cite{JJ_dec}, there exists a scheme $Q^+$, along  with morphisms
\[
\begin{tikzcd}[row sep=large]
Q^+\arrow[r, "\theta"]\arrow[d, "\rho"] & Q=\Quot^{d}(\OO^{\oplus r}_{\BA^n})\\
    Q^{\BG_{\mathrm{m}}},&
\end{tikzcd}
\]
such that $\theta$ induces a bijection between the closed points of $Q^+$ and those of the closed subscheme $Q_0 \subset Q$, as proved in \cite[Lemma 5.3]{Joachim-Sivic}.
The $\BG_{\mathrm{m}}$-fixed locus $Q^{\BG_{\mathrm{m}}}$ naturally decomposes as
\begin{equation}
\label{decomposition-homogeneous}
Q^{\BG_{\mathrm{m}}}=\coprod_{|h|=d}\CH_h,
\end{equation}
where the disjoint union is over all possible Hilbert--Samuel functions of size $\lvert h\rvert=d$, and the  `homogeneous locus' $\CH_h$ parametrises \emph{graded quotients} $\BC[x_1,\ldots,x_n]^{\oplus r} \onto T$ of $\BC[x_1,\ldots,x_n]$-modules (such that $T$ has Hilbert--Samuel function $h$). Pulling back the decomposition \eqref{decomposition-homogeneous} along $\rho$, we obtain a stratification 
\begin{align*}
Q^+=\coprod_{\lvert h\rvert=d} Q^+_{h},
\end{align*}
and the morphisms $\theta, \rho$ induce morphisms on  each  component of $ Q^+$
\begin{equation}
\label{eqn:rho-h}
\begin{tikzcd}
Q^{+}_{h,\mathsf{red}}\arrow[dr, "\theta_h"]\arrow[d, hook] &\\
   Q^+_{h}\arrow[d, "\rho_h"] & H_{h}\\
   \CH_h&
\end{tikzcd}
\end{equation}
where, once again, $\theta_h$ is a bijection on closed points, and the first vertical arrow is the closed immersion of the reduction. The morphism  $\rho_h$ should be thought as  a \emph{retraction} of the generalised  Białynicki-Birula scheme $Q_{h}^+$   onto its $\BG_{\mathrm{m}}$-fixed locus $  \CH_h$.

\subsection{Linear locus}
We introduce a distinguished locus of the punctual Quot scheme, which we call the \emph{linear locus}, and will play a key role in the proof of \Cref{thm:main-intro-A}.
\begin{definition}
\label{def:linearquot}
Let $n,r,d\geq 1$ be integers. The \emph{linear locus} in $ \Quot^d(\OO^{\oplus r}_{\BA^n})_0$ is the  closed subset
   \[
    \LQuot^d(\OO^{\oplus r}_{\BA^n})=\Set{z\in \Quot^d(\OO^{\oplus r}_{\BA^n})_0| \ell(h_z) < 2} .
   \]
\end{definition}

By construction, the decomposition in \eqref{HSstratification} restricts to the linear locus, thus inducing the locally closed stratification
\begin{equation}
\label{eqn:strata-linear-quot}
 \LQuot^d(\OO^{\oplus r}_{\BA^n})=\coprod_{ \substack{\lvert h\rvert=d \\ \ell(h)< 2}}H^{n,r}_{h}.
\end{equation}
We show now that the strata building up $\LQuot^d(\OO^{\oplus r}_{\BA^n})$ are all smooth, being suitably described in terms of Grassmannian fibrations.

\begin{prop}
\label{prop: linear locus}
Let $n,r,d \geq 1$ be positive integers and set $Q=\Quot^{d}(\OO^{\oplus r}_{\BA^n})$. Let $h$ be a Hilbert--Samuel function of length $\ell(h) < 2$ and size $\lvert h \rvert = d$. Then
\begin{enumerate}
    \item [\mylabel{homogeneous-locus}{\normalfont{(1)}}] The homogeneous locus $\CH_h\subset Q^{\BG_{\mathrm{m}}}$ is a Grassmann bundle over $\Gr(h(0),r)$ with fibre equal to $\Gr(h(1),h(0)\cdot n)$. In particular, it is a smooth projective variety.
    \item [\mylabel{strati}{\normalfont{(2)}}] The map $\rho_h \colon Q^+_{h} \to \CH_h$ obtained in \eqref{eqn:rho-h} is a vector bundle of rank \[h(1)(r-h(0)).\] In particular,
    $H_h\cong Q^{+}_{h}$ is a smooth quasiprojective variety.
\end{enumerate}
\end{prop}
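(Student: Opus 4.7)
The plan for~\ref{homogeneous-locus} is a direct parametrisation. A closed point of $\CH_h$ is a graded surjection $R^{\oplus r}\twoheadrightarrow T$ with $T$ concentrated in degrees $0$ and $1$ and Hilbert--Samuel function $h$; its kernel $K\subset R^{\oplus r}$ is graded, hence determined by $K_0\subset \BC^r$ of codimension $h(0)$ and $K_1\subset R_1^{\oplus r}=\BC^n\otimes\BC^r$ of codimension $h(1)$, subject only to the module constraint $K_1\supset R_1\cdot K_0=\BC^n\otimes K_0$ (for $i\geq 2$ one must take $K_i=(R^{\oplus r})_i$, imposing no further conditions on $K_0,K_1$). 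Reading $T_0=\BC^r/K_0$ as a variable point of $\Gr(h(0),r)$ and $T_1$ as an $h(1)$-dimensional quotient of $\BC^n\otimes T_0$ identifies $\CH_h$ with the relative Grassmannian $\Gr(h(1),\BC^n\otimes\mathscr Q)\to \Gr(h(0),r)$, where $\mathscr Q$ is the tautological rank-$h(0)$ quotient bundle; this is a smooth projective Grassmann bundle with fibre $\Gr(h(1),h(0)\cdot n)$.

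For~\ref{strati} my strategy is to describe $H_h$ directly as a vector bundle over $\CH_h$ and only afterwards identify the resulting projection with $\rho_h$. The hypothesis $\ell(h)<2$ forces $\mathfrak m^2 T=0$ (by Nakayama applied to $\mathfrak m^2T/\mathfrak m^3T=0$), so every point of $H_h$ factors through $A^{\oplus r}$, with $A=R/\mathfrak m^2=\BC\oplus(\mathfrak m/\mathfrak m^2)$. For a kernel $K\subset A^{\oplus r}=\BC^r\oplus(\BC^n\otimes\BC^r)$, set $V=\pi_0(K)\subset\BC^r$ and $W=K\cap(\BC^n\otimes\BC^r)$; since $\mathfrak m^2=0$ in $A$, the only submodule condition is $W\supset\BC^n\otimes V$, while the extension $0\to W\to K\to V\to 0$ is specified by an unconstrained linear splitting $s\colon V\to(\BC^n\otimes\BC^r)/W$. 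Any such triple $(V,W,s)$ with $\dim V=r-h(0)$ and $\dim W=rn-h(1)$ yields a valid quotient of Hilbert--Samuel function $h$, and $z\mapsto\mathsf{gr}(z)$ corresponds exactly to forgetting $s$.

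Globalising, let $\mathscr S$ denote the rank-$(r-h(0))$ tautological subbundle on $\Gr(h(0),r)$ pulled back to $\CH_h$, and $\mathscr T_1$ the relative tautological rank-$h(1)$ quotient bundle on $\CH_h$; then the choice of $s$ is a section of the vector bundle $\lHom(\mathscr S,\mathscr T_1)$, so the projection $H_h\to\CH_h$ is a Zariski-locally trivial vector bundle of rank $(r-h(0))\cdot h(1)$ and $H_h$ is smooth quasiprojective. To conclude, I would match this projection with $\rho_h$: the Białynicki-Birula package of \cite{JJ_dec,Joachim-Sivic} provides a smooth $Q_h^+$ over the smooth fixed locus $\CH_h$ together with the $\BG_{\mathrm{m}}$-equivariant morphism $\theta_h\colon Q_{h,\red}^+\to H_h$, which is a bijection on closed points over $\CH_h$. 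A weight decomposition of $\Hom_R(K,T)$ at a graded point shows that its positive-weight part is $\Hom_\BC(K_0,T_1)$ of dimension $(r-h(0))\cdot h(1)$; since $Q_{h,\red}^+$ and $H_h$ are both smooth of the same relative dimension over $\CH_h$ and $\theta_h$ is a $\CH_h$-bijection on points, $\theta_h$ is an isomorphism.

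The main obstacle will be this last identification: the bookkeeping of the triples $(V,W,s)$ and the resulting vector bundle structure is routine linear algebra, whereas promoting $\theta_h$ from a set-theoretic bijection to a scheme-theoretic isomorphism requires combining the explicit smoothness of $H_h$ just proved with the BB machinery and a tangent-space weight count.
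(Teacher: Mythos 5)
Your part~\ref{homogeneous-locus} is essentially the paper's argument: both proofs identify $\CH_h$ with the relative Grassmannian $\Gr(h(1),\CT_0^{\oplus n})\to\Gr(h(0),r)$ by splitting a graded kernel into its degree-$0$ and degree-$1$ pieces, the degree-$1$ piece being forced to contain $\BC^n\otimes K_0$. No essential difference.

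For part~\ref{strati} you take a genuinely different route from the paper. The paper constructs a specific locally free sheaf $\CF=\pi_\ast\lHom(\CJ_0,\CK_1)$ on $Q^{\BG_{\mathrm m}}$ from the universal sequence, invokes (via the cited \cite{Hilb_11,8POINTS}) that $\rho$ is the structure map of $\mathrm{Tot}(\CF)$, and then reads off the rank as $\dim\Hom_{\OO_{\BA^n}}(J_0,K_1)=(r-h(0))h(1)$. You instead parametrise $H_h$ \emph{directly}: since $\Fm^2T=0$, quotients are kernels $K\subset A^{\oplus r}$ with $A=R/\Fm^2$, encoded by the triple $(V,W,s)$, and the moduli is a $\lHom(\mathscr S,\mathscr T_1)$-torsor (in fact the total space of that bundle) over $\CH_h$. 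The rank count agrees with the paper's. This explicit parametrisation is a nice, more hands-on complement to the universal-sheaf computation, and your Nakayama observation ($\ell(h)<2\Rightarrow\Fm^2T=0$) is exactly the right starting point.

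However, there is a genuine gap in your closing step, where you match $H_h$ with $Q^+_h$. You assert that the BB package ``provides a smooth $Q_h^+$'' and then conclude $\theta_h$ is an isomorphism because source and target are both smooth of the same relative dimension and $\theta_h$ is a bijection on closed points. Neither half of this stands as written. First, the generalised BB decomposition of \cite{JJ_dec,Joachim-Sivic} does \emph{not} tell you $Q^+_h$ is smooth a priori (the Quot scheme is singular, and for strata with $\ell(h)\geq 2$ the attracting set $Q^+_h$ is indeed badly behaved); the smoothness of $Q^+_h$ for linear $h$ is precisely the content you are trying to establish, so asserting it is circular. Second, even granting smoothness, a bijective morphism of smooth varieties of the same dimension over $\BC$ need not be an isomorphism (consider $(\BA^1\setminus\{0\})\sqcup\{\pt\}\to\BA^1$): to promote bijectivity to an isomorphism you need something extra, such as properness or the fact that $\theta_h$ is an immersion. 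To close the argument you should either (i) invoke the structural result that $Q^+\to Q^{\BG_{\mathrm m}}$ is the total space of the locally free sheaf $\pi_\ast\lHom(\CJ_0,\CK_1)$ (the route the paper takes, deducing smoothness of $Q^+_h$ from that of $\CH_h$), or (ii) record from the BB machinery that $\theta_h$ is a monomorphism/locally closed immersion, so that a pointwise bijection onto the reduced $H_h$ forces an isomorphism. As it stands, your tangent-space weight count of $\Hom_\BC(K_0,T_1)$ is correct and useful, but it bounds the dimension of the attractor rather than proving smoothness.
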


\begin{proof} 
Let 
\[ 
\begin{tikzcd}
\OO^{\oplus r}_{\Gr(h(0),r)}\arrow[two heads]{r} & \CT_0
\end{tikzcd}
\]
be  the universal quotient on the Grassmannian $\Gr(h(0),r)$, and consider the  relative Grassmannian
\[
\begin{tikzcd}
\Gr\left(h(1),\CT_0^{\oplus n}\right)\arrow{r} & \Gr(h(0),r),
\end{tikzcd}
\]
whose fibers over a closed point $[\BC^r\onto T_0]\in \Gr(h(0),r)$ are given by 
$ \Gr(h(1),T_0^{\oplus n})$. We claim that 
\begin{align*}
    \CH_h\cong \Gr\left(h(1),\CT_0^{\oplus n}\right).
\end{align*}
To a closed point
\[
\begin{tikzcd}
\bigl({[}\BC^r\arrow[two heads]{r} & T_0{]}, {[} T_0^{\oplus n}\arrow[two heads]{r} & T_1{]} \bigr)\,\in\, \Gr\left(h(1),\CT_0^{\oplus n}\right)
\end{tikzcd}
\]
we associate the graded quotient
\[
\begin{tikzcd}
{\bigl[}
\OO_{\BA^n}^{\oplus r}\arrow[two heads]{r} & \BC^r\oplus \BC^{rn}\arrow[two heads]{r} & T_0\oplus T_0^{\oplus n}\arrow[two heads]{r} & T_0\oplus T_1
{\bigr]} \,\in\, \CH_h,
\end{tikzcd}
\]
where the first quotient is the projection onto the degree $\leq 1$ part of the $\BC$-graded vector space $\OO^{\oplus r}_{\BA^n}$. Conversely, suppose given a homogeneous quotient 
\[
\begin{tikzcd}
{[}\OO_{\BA^n}^{\oplus r}\arrow[two heads]{r} & T_0\oplus T_1{]}\,\in\, \CH_h.
\end{tikzcd}
\]
Taking the degree 0 and 1 parts yields quotients 
\[
\begin{tikzcd}
{\bigl[}\BC^r\arrow[two heads]{r} & T_0{\bigr]}, \qquad {\bigl[}\BC^{rn}\arrow[two heads]{r} & T_1{\bigr]}.
\end{tikzcd}
\]
Since $T_0$ generates $T_0\oplus T_1$ as an $\OO_{\BA^n}$-module, the second quotient factors as
\[
\begin{tikzcd}
 \BC^{rn}\arrow[two heads]{r} & T_0^{\oplus n}\arrow[two heads]{r} & T_1,
\end{tikzcd}
\]
and thus we can construct the closed point 
\[
\begin{tikzcd}
\bigl({[}\BC^r\arrow[two heads]{r} & T_0{]}, {[} T_0^{\oplus n}\arrow[two heads]{r} & T_1{]} \bigr)\,\in\, \Gr\left(h(1),\CT_0^{\oplus n}\right).
\end{tikzcd}
\]
The two associations are readily checked to give a bijection at the level of closed points  and to extend to flat families, yielding the desired isomorphism.

The proof of \ref{strati} works along the same lines as the proofs of \cite[Lemma~3.2]{Hilb_11} and \cite[Prop.~4.3]{8POINTS}.  
Consider the universal short exact sequence over  $Q^{\BG_{\mathrm{m}}}\times \BA^n$
\[
\begin{tikzcd}
0\arrow{r} & \CJ\arrow{r} & \OO^{\oplus r}_{Q^{\BG_{\mathrm{m}}}\times \BA^n}\arrow{r} & \CK\arrow{r} & 0,
\end{tikzcd}
\]
and let $\pi\colon Q^{\BG_{\mathrm{m}}}\times \BA^n\to  Q^{\BG_{\mathrm{m}}}$ be the projection. The sheaves $\CJ$ and $\CK$ are naturally $\BZ$-graded, the sheaf\footnote{One can check that in this case $\CF\cong \lHom(\pi_*\CJ_0,\pi_*\CK_1)$.} 
\[
\CF=\pi_\ast \lHom(\CJ_0,\CK_1)
\]
is locally free, and $\rho \colon Q^+ \to Q^{\BG_{\mathrm{m}}}$ coincides with the structure map
\[
\begin{tikzcd}
Q^+\cong \mathrm{Tot}_{Q^{\BG_{\mathrm{m}}}}(\CF) = \Spec \Sym \CF^\vee \arrow{r} & Q^{\BG_{\mathrm{m}}}.
\end{tikzcd}
\]
Now fix a Hilbert--Samuel function as in the statement, and a point 
\[
\begin{tikzcd}
z = 
{[}J\arrow[hook]{r} &  \OO^{\oplus r}\arrow[two heads]{r} & K{]} \in \CH_h\subset  Q^{\BG_{\mathrm{m}}}.
\end{tikzcd}
\]
The rank of $\rho_h = \rho|_{\CH_h}$ is computed as 
\begin{align*}
    \rk \CF\big|_z &=\dim_{\BC} \Hom_{\OO_{\BA^n}}(J_0, K_1)\\
    &=(r-h(0))h(1).
\end{align*}
In particular, the Białynicki-Birula stratum $Q_{h}^+$ is smooth and therefore isomorphic to $H_h$.
\end{proof}

We next show that  the Hilbert--Samuel strata $H_h$ appearing in the complement of the linear locus are locally in bijection with affine fibrations.

\begin{prop}
\label{prop: non linear pieces}
Let $n,r \geq 1$ be integers and let $h$ be a Hilbert--Samuel function of length $t=\ell(h)\geq 2$. Then each point $[K]\in H^{n,r}_h$ is contained in an open  $ U_K\subset H_h^{n,r}$ such that there exists a scheme $ W_K$ and a bijective morphism 
\[
\begin{tikzcd}[row sep =tiny]
   W_K\times \BA^{(r-h(0)+h(0)n-h(1))h(t)  } \arrow[r] & U_K.
\end{tikzcd}
\] 
\end{prop}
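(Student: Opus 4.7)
The plan is to leverage the Bia\l{}ynicki--Birula decomposition, extending the strategy of \Cref{prop: linear locus}\ref{strati} to allow for the fact that the retraction $\rho_h$ is generally not a vector bundle when $\ell(h)\geq 2$. Since $\theta_h\colon Q^+_{h,\mathsf{red}}\to H_h^{n,r}$ is a bijection on closed points, it suffices to produce, for each $[K]\in H_h^{n,r}$, a bijective morphism $W_K\times \BA^N\to \rho_h^{-1}(\Omega)$ over a suitably small affine open $\Omega\subset \CH_h$ containing the graded point $[\mathsf{gr}(K)]=\rho_h\circ \theta_h^{-1}([K])$; composing with $\theta_h$ and passing to reductions then delivers the required morphism onto an open $U_K\subset H_h^{n,r}$.

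Concretely, the first step is to pick an affine open $\Omega\subset\CH_h$ around $[\mathsf{gr}(K)]$ on which the universal graded pieces $\CT_0,\ldots,\CT_t$ trivialize (so that $\CH_h$ is identified locally with an open of a suitable iterated Grassmann bundle). A closed point of $\rho_h^{-1}(\Omega)\subset Q^+_h$ then corresponds to a (possibly non-graded) quotient of $\OO_{\BA^n}^{\oplus r}$ whose associated graded lies in $\Omega$, and is encoded by a positive-weight $\OO_{\BA^n}$-module homomorphism $\phi\in \Hom(J,K)^{>0}$ subject to polynomial equations expressing $\OO_{\BA^n}$-linearity of the deformed ideal and compatibility with the syzygies of $J$. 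This realises $\rho_h^{-1}(\Omega)$ as a closed subscheme of $\Omega\times \Hom(J,K)^{>0}$.

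The core of the argument is to single out the \emph{free} summand
\[
V \;\defeq\; \Hom(J_0, K_t)\oplus \Hom(J_1/\Fm J_0,\, K_t)\;\subset\;\Hom(J,K)^{>0},
\]
of dimension $(r-h(0))h(t)+(h(0)n-h(1))h(t)=N$. An element of $V$ is a homomorphism defined on \emph{generators} of $J$ in degrees $0$ or $1$, with values in the \emph{top} graded piece $K_t$; since $\Fm\cdot K_t\subset K_{t+1}=0$, multiplying such a value by any linear form vanishes, so no $\OO_{\BA^n}$-linearity constraint propagates into higher weights, and because the value is prescribed on generators, no syzygy relation within $J$ is invoked either. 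Consequently, the defining equations of $\rho_h^{-1}(\Omega)$ inside $\Omega\times\Hom(J,K)^{>0}$ involve only coordinates on a complement of $V$, yielding a product decomposition $\rho_h^{-1}(\Omega)\cong W\times V$ (after possibly shrinking $\Omega$), with $W$ the slice $\{V=0\}$. Setting $W_K=W_{\mathsf{red}}$ and composing with $\theta_h$ gives the sought bijective morphism $W_K\times\BA^N\to U_K$.

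The main obstacle will be the rigorous verification that $V$ is genuinely unconstrained by the flatness equations. I would carry this out by fixing a minimal graded free presentation of the universal ideal over $\Omega$ and writing the $\OO_{\BA^n}$-linearity and syzygy-compatibility equations explicitly, checking monomial-by-monomial that none of them depends on $V$-coordinates. The hypothesis $t=\ell(h)\geq 2$ enters exactly here: it ensures that both summands of $V$ lie in strictly positive weight (in particular the second contributes in weight $t-1\geq 1$), so that $V$ really is a subspace of the BB fibre direction $\Hom(J,K)^{>0}$ and is not absorbed into the base $\CH_h$.
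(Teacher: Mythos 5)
Your route is genuinely different from the paper's. You go through the Bia\l{}ynicki--Birula machinery: working in $Q^+_h\to\CH_h$, trivialising the universal graded pieces over an affine open $\Omega\subset\CH_h$, realising $\rho_h^{-1}(\Omega)$ as a closed subscheme of $\Omega\times\Hom(J,K)^{>0}$, isolating the weight-$t$ and weight-$(t-1)$ summand $V=\Hom(J_0,K_t)\oplus\Hom(J_1/\Fm J_0,K_t)$, and then composing with $\theta_h$. The paper never touches the BB scheme here: it works \emph{directly} on $H_h^{n,r}$ with standard bases for the local ordering $x_1<\cdots<x_n<1$, chooses an explicit open $U_K$ where certain projections $\Psi_K,\Delta_K,\Gamma_K\hookrightarrow R^{\oplus r}/K'$ stay injective, defines $W_K\subset U_K$ as the reduced locus $\{\mdeg K'<t\}$, and then writes down the bijective morphism $W_K\times\BA^N\to U_K$ by the explicit recipe $b\mapsto b+\mu(\pi(b))$, i.e.\ by adding degree-$t$ terms to the degree-$0$ and degree-$1$ standard basis elements. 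Both constructions target the \emph{same} $N$-dimensional direction (your $V$ is the paper's $\Hom_\BC(\Lambda_K\oplus\Theta_K,\Gamma_K)$), and you correctly identify that $t\geq2$ is what puts the second summand in strictly positive weight. What each approach buys: yours fits more systematically into the deformation-theoretic picture and would potentially extend to more general initial-ideal strata, while the paper's standard-basis construction is more elementary, stays entirely on $H_h^{n,r}$, and makes the ``unconstrained'' verification completely explicit --- S-polynomial contributions from degree-$t$ tails on degree-$0,1$ generators all land in degree $\geq t+1$, hence reduce trivially.

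Two things to be careful about if you pursue your route. First, the step you flag yourself --- that the defining equations of $\rho_h^{-1}(\Omega)$ do not touch the $V$-coordinates --- is the real content and should not be treated as a routine monomial-by-monomial check: the obstruction equations are generally quadratic in the $\Hom$-coordinates, so you need to argue that \emph{every} monomial containing a $V$-coordinate is forced to vanish by the graded structure, not just the linear pieces; the argument is essentially the same degree count as the paper's S-polynomial computation, but you should actually carry it out. Second, composing with $\theta_h$ only gives you a bijective morphism onto $\theta_h(\rho_h^{-1}(\Omega))$; you then need this image to be \emph{open} in $H_h^{n,r}$, which does not follow formally from $\theta_h$ being bijective on closed points --- you would have to invoke that $\theta_h$ is a homeomorphism (or shrink $\Omega$ and argue via constructibility). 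The paper sidesteps this entirely by building $U_K$ as an explicit open subset of $H_h^{n,r}$ from the start.
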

{ \begin{proof} 
Consider a closed point $[K]\in H_h^{n,r}$.
 Then, by  \cite{tandardBasis},  the existence of standard basis with respect to the local ordering given by $x_1<\dots < x_n <1$,  
    we can write  
    \begin{equation}\label{eq:standbas}
         K=\langle A_K \rangle_R+\langle B_K \rangle_R+\langle C_K \rangle_R,
    \end{equation}
    where $A_K, B_K, C_K\subset K$ are such that the initial degree (with respect to the standard grading) of all the elements  in $A_K,B_K,C_K$ is respectively $0, 1 $ and larger than $1$. 
   By the assumption on $h$, we have
   \begin{align*}
    |A_K|&=r-h(0),\\
    |B_K|&=h(0)n-h(1).
   \end{align*} 
 For every standard basis as in \eqref{eq:standbas}, define $\mdeg_{B_K}= \max_{b\in B_{K}} \deg b$ and similarly for $A_K$, and 
   \[
    \mdeg K=\max\{ \min_{\eqref{eq:standbas}} \mdeg_{B_K}, \min_{\eqref{eq:standbas}} \mdeg_{A_K}\}.
    \]
    Fix  subspaces
    \[
   \Psi_K\subset R_0^{\oplus r}, \quad \Delta_K \subset R_1^{\oplus r},\quad \Gamma_K \subset R_{t}^{\oplus r}
    \]
such that the restrictions of the projection $R^{\oplus r} \onto R^{\oplus r}/K$ to $\Psi_K,\Delta_K$ and $ \Gamma_K$  are  injective. Consider the open neighbourhood $[K]\in U_K\subset H_h^{n,r} $ consisting of the points $[K']\in H_h^{n,r}$ tuch that the restrictions of the projection $R^{\oplus r} \onto R^{\oplus r}/K'$ to $\Psi_K,\Delta_K$ and $ \Gamma_K$  are still  injective.
     
Now, let $W_K\subset U_K$ be the locus
\[
W_K=\Set{[K']\in U_K | \mdeg K'< t },
\]
endowed with the reduced structure. The initial elements of $A_K$ (resp.~$B_K$) span a subspace 
\begin{align*}
\Lambda_K&=\langle \In a \,|\,a \in A_K\rangle_{\BC}= (\In K)_0\subset R^{\oplus r}_0,\\
\Theta_K&=\langle \In b \,|\,b \in B_K\rangle_{\BC}\subset (\In K)_1\subset R^{\oplus r}_1.
\end{align*}
Note that $\Theta_K\cong\BC^{ h(0)n-h(1) }$ and $\Lambda_K\cong \BC^{ r-h(0)}$ only depend on $K$ and not on $A_K, B_K$. Set 
\[
\Sigma_K=\langle x_i\cdot\In a  |\  a \in A_K,\ i=1,\ldots,n \rangle_{\BC},
\]
and consider the direct sum decomposition and the projection
\begin{equation}\label{eq:decoR1}
    \begin{tikzcd}[row sep = tiny]
R_0^{\oplus r}\oplus R_1^{\oplus r}\cong  &[-1.1cm] \Psi_K\oplus\Lambda_K\oplus\Sigma_K\oplus \Delta_K \oplus \Theta_K\arrow[r,"\pi'"] &\Lambda_K\oplus  \Theta_K\\
&(a,u,b,c,v)\arrow[r,mapsto]& (u,v).
\end{tikzcd}
\end{equation} 
Denote by $\pi\colon R^{\oplus r} \to \Lambda_K\oplus \Theta_K$ the graded $\BC$-linear map obtained by precomposing $\pi'$ with the projection $R^{\oplus r}\onto R_0^{\oplus r}\oplus  R_1^{\oplus r}$. 
Let $[K']\in W_K$ and choose a standard basis as in \eqref{eq:standbas}. For every $\mu \in\Hom_\BC(\Lambda_K\oplus \Theta_K, \Gamma_K)$  consider the map 
\begin{equation}\label{eq:biho}
    \begin{tikzcd}[row sep =tiny]
   A_{K'}\cup B_{K'} \arrow[r, "\psi_{\mu}"] & R^{\oplus r} \\
    b\arrow[r,mapsto] &b+ \mu (\pi (b)) .
\end{tikzcd}
\end{equation}
Note that, if $     K'=\langle A_{K'} \rangle_R+\langle B_{K'} \rangle_R+\langle C_{K'} \rangle_R$ is a standard basis, then 
\[
    \langle \psi_{\mu}A_{K'} \rangle_R+\langle \psi_\mu B_{K'} \rangle_R+\langle C_{K'} \rangle_R
\]
is the standard basis of a (unique) module corresponding to a closed point of $ U_K$. 
This induces  the required bijective  morphism 
\[
\begin{tikzcd}[row sep =tiny]
   W_K\times \BA^{(r-h(0)+h(0)n-h(1))h(t) } \arrow[r] & U_K  \\  
   {([\langle A_{K'}\rangle_R+\langle B_{K'}\rangle_R+\langle C_{K'}\rangle_R], \mu)}\arrow[r,mapsto] & {[\langle \psi_\mu A_{K'}\rangle_R+\langle \psi_{\mu}B_{K'}\rangle_R+\langle C_{K'}\rangle_R]},
\end{tikzcd}
\]
where we identified $ \BA^{(r-h(0)+h(0)n-h(1))h(t)}\cong \Spec\,\Sym^\bullet(\Hom_\BC(\Lambda_K\oplus \Theta_K, \Gamma_K)^\vee)$.
 \end{proof} 
}

\section{Motivic stabilisation}
\subsection{Grothendieck ring of varieties}
 The \emph{Grothendieck ring of varieties}, denoted $K_0(\Var_{\BC})$, is the free abelian group generated by isomorphism classes  $[X]$ of 
 finite type $\BC$-varieties, modulo the relations   $[X] = [Z] + [{X\setminus Z}]$ whenever $Z \into X$ is a closed subvariety of $X$.  The operation $ [X]\cdot [Y] = [X\times Y]$
defines   a ring structure on $K_0(\Var_{\BC})$.

The Grothendieck ring of varieties comes equipped with a \emph{power structure},  that is a map
\[
\begin{tikzcd}[row sep=tiny]
   (1+tK_0(\Var_{\BC})\llbracket t \rrbracket) \times K_0(\Var_{\BC}) \arrow{r} & 1+tK_0(\Var_{\BC})\llbracket t \rrbracket\\
   (A(t),m) \arrow[mapsto]{r} & A(t)^m,
\end{tikzcd}
\]
satisfying natural compatibilities, see e.g.~\cite{GLMps}. If $Y$ is a smooth quasiprojective variety of dimension $n$ and $\CE$ is a locally free $\OO_Y$-module of rank $r$, one has the relation
\[
\sum_{d\geq 0}\,[\Quot^d(\CE)]t^d = \left(\sum_{d\geq 0}\,[\Quot^d(\OO_{\BA^n}^{\oplus r})_0]t^d \right)^{[Y]},
\]
which says that the motives of the punctual Quot schemes determine the motives of the full Quot schemes, regardless of the bundle $\CE$ chosen \cite{ricolfi2019motive}. In this respect, analysing the structure of the punctual motives completely solves the `motivic Quot theory'.

\smallbreak
In what follows, we  also work with the complete topological ring
\[
\widehat{K_0}(\Var_\BC) = \varprojlim_{n} K_0(\Var_\BC) / \BL^n K_0(\Var_\BC),
\]
namely the $\BL$-adic completion of $K_0(\Var_\BC)$, where $\BL \defeq [\BA^1]\in {K_0}(\Var_\BC)$ denotes the \emph{Lefschetz motive}.

\subsection{Proof of the main theorem}
We start with some preliminary results.

\begin{lemma}
\label{lemma:infinite-grass-motive}
Fix an integer $d\geq 0$. There is an identity
    \begin{align*}
[\Gr(d,\infty)]=\prod_{k=1}^d\frac{1}{1-\BL^k}\in \widehat{K_0}(\Var_\BC).
    \end{align*}
\end{lemma}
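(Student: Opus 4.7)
The approach is to reduce the computation to the classical formula for the motive of a finite-level Grassmannian and then check convergence in the $\BL$-adic completion.

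First, I would recall (for instance via the Schubert cell decomposition of $\Gr(d,n)$ indexed by partitions inscribed in a $d\times(n-d)$ rectangle, or via the presentation of $\Gr(d,n)$ as a homogeneous space $\GL_n/P$ and the corresponding Gaussian binomial) the identity
\[
[\Gr(d,n)] = \prod_{k=1}^{d} \frac{1-\BL^{n-d+k}}{1-\BL^{k}} \in K_0(\Var_\BC).
\]
Since each $1-\BL^{k}$ reduces to $1$ modulo $\BL$, it is a unit in $\widehat{K_0}(\Var_\BC)$ with inverse $\sum_{m\geq 0}\BL^{km}$; hence the right-hand side makes unambiguous sense in the completion.

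Next, I would argue that $([\Gr(d,n)])_n$ is Cauchy in $\widehat{K_0}(\Var_\BC)$ and identify its limit. For each fixed $k\in\{1,\ldots,d\}$, the class $\BL^{n-d+k}$ lies in $\BL^{n-d+k}K_0(\Var_\BC)$, hence tends to $0$ in the $\BL$-adic topology as $n\to\infty$. Continuity of multiplication then yields
\[
\lim_{n\to\infty}[\Gr(d,n)] = \prod_{k=1}^{d}\lim_{n\to\infty}\frac{1-\BL^{n-d+k}}{1-\BL^{k}} = \prod_{k=1}^{d}\frac{1}{1-\BL^{k}}.
\]
By the convention adopted throughout the paper, the motive of the ind-scheme $\Gr(d,\infty)=\varinjlim_n \Gr(d,n)$ is precisely this limit, giving the desired formula.

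As a cross-check, a more combinatorial variant would work directly with the Schubert cell decomposition: the closed embedding $\Gr(d,n)\hookrightarrow\Gr(d,n+1)$ adjoins exactly the cells whose indexing partition satisfies $\lambda_{1}=n+1-d$, contributing a class in $\BL^{n+1-d}K_{0}(\Var_{\BC})$, which demonstrates the Cauchy property explicitly. Passing to the colimit removes the constraint $\lambda_{1}\leq n-d$, so the limit becomes $\sum_{\lambda:\,\ell(\lambda)\leq d}\BL^{|\lambda|}$, which by the classical Euler-type identity $\sum_{\ell(\lambda)\leq d}q^{|\lambda|}=\prod_{k=1}^{d}(1-q^{k})^{-1}$ equals the asserted product. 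I do not anticipate any serious obstacle here: the only substantive ingredient is the standard formula for $[\Gr(d,n)]$, and the rest is a routine continuity argument in $\widehat{K_0}(\Var_\BC)$.
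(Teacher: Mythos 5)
Your argument is correct, but it takes a genuinely different route from the paper. You invoke the closed-form Gaussian binomial identity $[\Gr(d,n)]=\prod_{k=1}^{d}\frac{1-\BL^{n-d+k}}{1-\BL^{k}}$ (or, equivalently, the Schubert cell count) and pass to the $\BL$-adic limit factor by factor; the paper instead argues by induction on $d$, using only the Pascal-type recurrence $[\Gr(d+1,n+1)]=[\Gr(d,n)]+\BL^{d+1}[\Gr(d+1,n)]$, taking $n\to\infty$ in that relation to obtain $[\Gr(d+1,\infty)]=[\Gr(d,\infty)]+\BL^{d+1}[\Gr(d+1,\infty)]$, and then solving for $[\Gr(d+1,\infty)]$ using the inductive hypothesis. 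Your approach is more direct if one is willing to quote the full product formula for the finite Grassmannian, and it has the advantage of making the Cauchy property of $([\Gr(d,n)])_n$ explicit (your Schubert-cell observation that $\Gr(d,n+1)\setminus\Gr(d,n)$ contributes a class in $\BL^{n+1-d}K_0(\Var_\BC)$ is exactly the right estimate, and is something the paper leaves implicit). The paper's induction is more minimal in its inputs, needing only the one-step $\BL$-binomial relation rather than the closed formula, and it sets up the same recursive mechanism that reappears later (e.g.\ in \Cref{lemma: prelim}). Both proofs are valid; yours is perhaps preferable for self-containedness about convergence, the paper's for economy of prerequisites.
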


\begin{proof}
We proceed by induction, the base step $[\Gr(0,\infty)]=1$ being clear.
By the standard $\BL$-binomial identity (see e.g.~\cite[Lemma 4.1]{MOTIVEMOTIVEMOTIVE}) we have
    \[
    [\Gr(d+1,n+1)] = [\Gr(d,n)]+\BL^{d+1}[\Gr(d+1,n)]. 
    \]
Taking the limit for $n\to \infty$ yields the relation
\begin{align*}
   [\Gr(d+1,\infty)] = [\Gr(d,\infty)]+\BL^{d+1}[\Gr(d+1,\infty)],
\end{align*}
which implies, by the inductive hypothesis, that 
\[
[\Gr(d+1,\infty)]=\frac{1}{1-\BL^{d+1}} [\Gr(d,\infty)]=\prod_{k=1}^{d+1}\frac{1}{1-\BL^{k}}.\qedhere
\]
\end{proof}
See \Cref{eqn:infinite-flag-motive} for a generalisation of \Cref{lemma:infinite-grass-motive} to the infinite flag variety.

\begin{remark}
The motive of the infinite Grassmannian can  be alternatively  extracted from the closed formula \cite[Eqn.~(2.6)]{GLMstacks}
\begin{align*}\label{eqn: inft grass formula}
    \sum_{d=0}^\infty \,[\Gr(d,\infty)]t^d=\prod_{i=0}^\infty\frac{1}{1-\BL^{i}t}\in \widehat{K_0}(\Var_\BC)\llbracket t \rrbracket.
\end{align*}
\end{remark}

\begin{prop}\label{lemma: prelim}
Fix integers $r,d \geq 1$. Then, there is an identity in $\BZ[\BL] \subset K_0(\Var_{\BC})$
\[
\sum_{i=1}^d\Bigg(\prod_{k=d-i+1}^{d-1}(1-\BL^k)\Bigg)[\Gr(i,r)]\BL^{(d-i)(r-i)} = \sum_{k=0}^{r-1} \BL^{dk}.
\]
\end{prop}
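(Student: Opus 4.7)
The plan is to induct on $r$, using the $\BL$-Pascal recursion for Gaussian binomials, namely
\[
[\Gr(i,r+1)] = [\Gr(i,r)] + \BL^{r+1-i}[\Gr(i-1,r)],
\]
to relate the sum $S_{r+1}$ to $S_r$, where $S_r$ denotes the left-hand side of the claimed identity. The base case $r=1$ is immediate: the factors $[\Gr(i,1)]$ vanish for $i\geq 2$, only the $i=1$ term survives, and it equals $[\Gr(1,1)]=1=\sum_{k=0}^{0}\BL^{dk}$.

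For the inductive step, I would substitute the recursion into $S_{r+1}$, splitting it as $A+B$ where
\[
A=\sum_{i=1}^d \prod_{k=d-i+1}^{d-1}(1-\BL^k)\,[\Gr(i,r)]\,\BL^{(d-i)(r+1-i)},
\]
\[
B=\sum_{i=1}^d \prod_{k=d-i+1}^{d-1}(1-\BL^k)\,\BL^{r+1-i}\,[\Gr(i-1,r)]\,\BL^{(d-i)(r+1-i)}.
\]
In $A$ I would use $(d-i)(r+1-i) = (d-i)(r-i) + (d-i)$ to factor out an extra $\BL^{d-i}$. In $B$ I would reindex $j=i-1$ and observe that $(r-j) + (d-j-1)(r-j) = (d-j)(r-j)$, obtaining
\[
B=\sum_{j=0}^{d-1} \prod_{k=d-j}^{d-1}(1-\BL^k)\,[\Gr(j,r)]\,\BL^{(d-j)(r-j)}.
\]
The $j=0$ summand is exactly $\BL^{dr}$ (empty product times $[\Gr(0,r)]=1$), and for $j\geq 1$ one factors $\prod_{k=d-j}^{d-1}(1-\BL^k) = (1-\BL^{d-j})\prod_{k=d-j+1}^{d-1}(1-\BL^k)$. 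Pairing those terms with the corresponding summands of $A$ (for $1\leq i\leq d-1$) and invoking the trivial cancellation $\BL^{d-i} + (1-\BL^{d-i}) = 1$ collapses the sum. What remains, together with the isolated $i=d$ term of $A$, is exactly $S_r + \BL^{dr}$. By the inductive hypothesis $S_r = \sum_{k=0}^{r-1}\BL^{dk}$, so $S_{r+1} = \sum_{k=0}^{r}\BL^{dk}$.

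The argument is essentially a combinatorial bookkeeping exercise, so the main obstacle is purely notational: carefully aligning the shifted product ranges and the monomial exponents so that the telescoping contribution $\BL^{dr}$ falls out cleanly. A conceptually attractive alternative would be to multiply the desired identity by $\prod_{k=1}^{d-1}(1-\BL^k)^{-1}$ and read it, via \Cref{lemma:infinite-grass-motive}, as the $n\to\infty$ limit of the decomposition $[\LQuot^d(\OO_{\BA^n}^{\oplus r})]=\sum_{i=1}^d [\Gr(i,r)][\Gr(d-i,in)]\BL^{(d-i)(r-i)}$ matched against $[\Gr(d-1,\infty)]\sum_{i=0}^{r-1}\BL^{di}$; but this is tautologically equivalent to the statement of \Cref{thm:main-intro-A} we are trying to prove, so the inductive computation is still needed to close the loop.
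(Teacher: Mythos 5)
Your proof is correct and essentially identical to the paper's own argument: both induct on $r$, both apply the $\BL$-binomial (Pascal) identity to split the sum, and both reduce to the recurrence $S_{r+1}-S_r=\BL^{dr}$ via the factorisation $\prod_{k=d-i}^{d-1}(1-\BL^k)=(1-\BL^{d-i})\prod_{k=d-i+1}^{d-1}(1-\BL^k)$ and the cancellation $\BL^{d-i}+(1-\BL^{d-i})=1$. The only difference is cosmetic (the paper writes $\mathsf R(r+1,d)-\mathsf R(r,d)$ directly whereas you decompose $S_{r+1}=A+B$ first).
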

\begin{proof}
    Let us put $\mathsf P_i(d) \defeq \prod_{k=d-i+1}^{d-1}(1-\BL^k)$, for $i\geq 1$, with the convention that $\mathsf P_1(d)=1$. The left-hand-side of the formula becomes
    \[\mathsf R(r,d)\defeq \sum_{i=1}^d \mathsf P_i(d)[\Gr(i,r)]\BL^{(d-i)(r-i)}.\]
The proof is concluded provided we show that $\mathsf R(r,d)$ satisfy the same recurrence relations of $\sum_{k=0}^{r-1} \BL^{dk}$, namely, for every $d\geq 1$, that $\mathsf R(1,d)=1$  and $\mathsf R(r+1,d)-\mathsf R(r,d)=\BL^{dr}$. 

The initial condition $\mathsf R(1,d)=1$ follows immediately from the definition. By the $\BL$-binomial identity 
\[[\Gr(i,r+1)] = [\Gr(i,r)]+\BL^{r+1-i}[\Gr(i-1,r)], \]
we have
\[
\mathsf R(r+1,d) = \sum_{i=1}^d \mathsf P_i(d)[\Gr(i,r)] \BL^{(d-i)(r+1-i)}  + \sum_{i=1}^d \mathsf P_i(d)[\Gr(i-1,r)]\BL^{(d-i)(r+1-i)+(r+1-i)}.
\]
Therefore
\begin{multline}
\label{eqn:madonna-psicotica}
    \mathsf R(r+1,d)-\mathsf R(r,d) = \sum_{i=1}^d \mathsf P_i(d)[\Gr(i,r)]
\left(\BL^{(d-i)(r+1-i)} - \BL^{(d-i)(r-i)}\right) \\
+ \sum_{i=1}^d \mathsf P_i(d)[\Gr(i-1,r)]\BL^{(d-i+1)(r-i+1)}.
\end{multline}
Rearranging the sums, the right hand side of \Cref{eqn:madonna-psicotica} becomes 
\[ \sum_{i=1}^d  \mathsf P_i(d)[\Gr(i,r)]
\left(\BL^{d-i}-1\right)\BL^{(d-i)(r-i)} \\
+ \sum_{j=0}^{d-1} \mathsf P_{j+1}(d)[\Gr(j,r)]\BL^{(d-j)(r-j)}.
\]
Notice that the summand for $i=d$ in the first sum vanishes. Thus, combining the two sums back yields 
\[\mathsf R(r+1,d)-\mathsf R(r,d) = \BL^{dr} + \sum_{i=1}^{d-1} \big(\mathsf P_i(d)(\BL^{d-i}-1) + \mathsf P_{i+1}(d)\big) [\Gr(i,r)] \BL^{(d-i)(r-i)}.\]
Finally, we have
\[
\mathsf P_{i+1}(d)=  \prod_{k=d-i}^{d-1}(1-\BL^k) =(1-\BL^{d-i})\mathsf P_i(d),
\] 
by which we conclude that $\mathsf R(r+1,d)-\mathsf R(r,d) = \BL^{dr}$, as required.
\end{proof}

We are ready to prove \Cref{thm:main-intro-A} from the Introduction.

\begin{theorem}
\label{main-A}
Fix integers $d,r\geq 1$. There is an identity
\[
[\Quot^d(\OO_{\BA^\infty}^{\oplus r})_0]=[\Gr(d-1, \infty)]\cdot \sum_{i=0}^{r-1} \BL^{di}\in \widehat{K_0}(\Var_\BC).
\]
\end{theorem}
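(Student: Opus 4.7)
The plan is to combine the Hilbert--Samuel stratification with the local structure results for non-linear strata in \Cref{prop: non linear pieces}, in order to reduce the computation to the linear locus, whose motive is then assembled from \Cref{prop: linear locus} and summed explicitly via \Cref{lemma:infinite-grass-motive} and \Cref{lemma: prelim}.

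First, I would apply the Hilbert--Samuel decomposition \eqref{HSstratification} to split
\[
[\Quot^d(\OO_{\BA^n}^{\oplus r})_0] = [\LQuot^d(\OO_{\BA^n}^{\oplus r})] + \sum_{\substack{|h|=d \\ \ell(h)\geq 2}} [H_h^{n,r}]
\]
and argue that the non-linear summands disappear in $\widehat{K_0}(\Var_\BC)$ as $n\to\infty$. For each $h$ with $t=\ell(h)\geq 2$, \Cref{prop: non linear pieces} covers $H_h^{n,r}$ by open sets $U_K$ each admitting a bijective morphism from a product $W_K\times \BA^{N(h,n)}$ with $N(h,n)=(r-h(0)+h(0)n-h(1))\,h(t)$. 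Since a non-trivial Hilbert--Samuel function has $h(0)\geq 1$ and, by definition of $t$, also $h(t)\geq 1$, the exponent $N(h,n)$ grows at least linearly in $n$. A finite covering argument, combined with scissor relations and the standard fact that a bijective morphism of finite type $\BC$-schemes gives an equality in $K_0(\Var_\BC)$, yields $[H_h^{n,r}] \in \BL^{N(h,n)} K_0(\Var_\BC)$, so these contributions vanish in the $\BL$-adic limit. Consequently,
\[
[\Quot^d(\OO_{\BA^\infty}^{\oplus r})_0] = \lim_{n\to\infty} [\LQuot^d(\OO_{\BA^n}^{\oplus r})].
\]

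Second, I would compute the right-hand side from the stratification \eqref{eqn:strata-linear-quot}. The Hilbert--Samuel functions of length less than $2$ are exactly $h=(i, d-i)$ for $1\leq i \leq d$, and \Cref{prop: linear locus} realises each $H_h$ as a rank $(d-i)(r-i)$ vector bundle over a Grassmann bundle over $\Gr(i, r)$ with fibre $\Gr(d-i, in)$. Since motives of Zariski-locally-trivial bundles factor as products, summing gives
\[
[\LQuot^d(\OO_{\BA^n}^{\oplus r})] = \sum_{i=1}^{d}[\Gr(i,r)]\,[\Gr(d-i, in)]\,\BL^{(d-i)(r-i)}.
\]
Passing to the $n\to\infty$ limit termwise and invoking \Cref{lemma:infinite-grass-motive} to factor
\[
[\Gr(d-i, \infty)] = [\Gr(d-1, \infty)] \cdot \prod_{k=d-i+1}^{d-1}(1-\BL^k),
\]
the expression becomes $[\Gr(d-1,\infty)]$ times the polynomial combination appearing on the left-hand side of \Cref{lemma: prelim}. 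Applying that lemma collapses it to $\sum_{i=0}^{r-1}\BL^{di}$, giving the claimed identity.

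I expect the main obstacle to be the first step, namely the $\BL$-adic vanishing of the non-linear strata: it is the one point at which the purely local description of \Cref{prop: non linear pieces} must be promoted to a global motivic statement, and this is only possible because the bijective morphisms there preserve classes in $K_0(\Var_\BC)$. Once this reduction to the linear locus is secured, the remainder is a bookkeeping exercise blending the Grassmann bundle structure with the infinite Grassmannian formula and the combinatorial identity already isolated in \Cref{lemma: prelim}.
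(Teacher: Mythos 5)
Your proposal is correct and follows essentially the same route as the paper's own proof: reduce to the linear locus by observing that the non-linear Hilbert--Samuel strata acquire a factor $\BL^{N(h,n)}$ with $N(h,n)\geq n+r-d+1$ (via \Cref{prop: non linear pieces} and a stratification argument converting bijective morphisms into motivic identities), compute $[\LQuot^d(\OO_{\BA^n}^{\oplus r})]$ from \Cref{prop: linear locus}, then pass to the limit and apply \Cref{lemma:infinite-grass-motive} and \Cref{lemma: prelim}. The decomposition, the role of each auxiliary result, and the final algebraic manipulation all match the paper's argument.
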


\begin{proof}
Let $h$ be a Hilbert--Samuel function of length $t\geq 2$. 
By \Cref{prop: non linear pieces} and a standard stratification argument, we have that $\BL^{(r-h(0)+h(0)n-h(1)) \cdot h(t)}$ divides the motive $[H_h^{n,r}]$, which implies that the difference 
\[
[\Quot^d(\OO_{\BA^n}^{\oplus r})_0]-[\LQuot^d(\OO_{\BA^n}^{\oplus r})]\in \BL^{n+r-d+1}K_0(\Var_{\BC})
\]
is divisible by a suitable power of $\BL$. In particular, this implies the  convergence
\begin{align}\label{eqn: convergence to 0}
    \lim_{n\to \infty}\left(  [\Quot^d(\OO^{\oplus r}_{\BA^n})_0]-[\LQuot^d(\OO_{\BA^n}^{\oplus r})]\right)=0\in \widehat{K_0}(\Var_\BC).
\end{align}
We claim that the limit $ \lim_{n\to \infty} [\LQuot^d(\OO_{\BA^n}^{\oplus r})]$ exists and is given by
\begin{align}\label{eqn: claim linear}
    \lim_{n\to \infty}[\LQuot^d(\OO_{\BA^n}^{\oplus r})]=[\Gr(d-1, \infty)]\cdot \sum_{i=0}^{r-1} \BL^{di}.
\end{align}
In particular, combining \eqref{eqn: convergence to 0} with \eqref{eqn: claim linear} concludes the proof.

To prove the claim, let $h $ be a Hilbert--Samuel function of length $\ell(h)<2$. Then, by \Cref{prop: linear locus}, we have 
\begin{equation}
\label{eqn: motives leq 2}
  [H^{n,r}_h]=[\Gr(h(0),r)][\Gr(h(1),h(0)\cdot n)]\BL^{h(1)(r-h(0))}.
\end{equation}
The possible Hilbert--Samuel functions $h$ of length $\ell(h)<2$ are of the form 
\[(1,d-1), (2,d-2),  \dots, (d,0),\] 
therefore combining \eqref{eqn: motives leq 2} and \eqref{eqn:strata-linear-quot} with one another we obtain
\begin{equation}
\label{eqn: motive linear quot}
[\LQuot^d(\OO_{\BA^n}^{\oplus r})]=\sum_{i=1}^d\,[\Gr(i,r)][\Gr(d-i,in)]\BL^{(d-i)(r-i)}.
\end{equation}
The existence of the limit $   \lim_{n\to \infty}[\LQuot^d(\OO^{\oplus r}_{\BA^n})]$ easily follows by the existence of the limit
\begin{align*}
    \lim_{n\to \infty}[\Gr(d-i,in)]=[\Gr(d-i,\infty)]\in  \widehat{K_0}(\Var_\BC),
\end{align*}
for all $d\geq i \geq 1$. This implies that
\begin{align}\label{eqn: limit on motives}
\begin{split}
     \lim_{n\to \infty}[\LQuot^d(\OO^{\oplus r}_{\BA^n})]&=   \sum_{i=1}^d\,[\Gr(i,r)][\Gr(d-i,\infty)]\BL^{(d-i)(r-i)} \\
 &= [\Gr(d-1,\infty)]\sum_{i=1}^d[\Gr(i,r)]\left(\prod_{k=d-i+1}^{d-1}(1-\BL^k)\right)\BL^{(d-i)(r-i)}\\
 &= [\Gr(d-1,\infty)]\cdot \sum_{i=0}^{r-1} \BL^{di},
 \end{split}
\end{align}
where in the second equality we used \Cref{lemma:infinite-grass-motive} and in the third equality we used \Cref{lemma: prelim}. This proves the claim and concludes the proof.
\end{proof}

\begin{remark}
Via a direct check, as done in \cite[\S 9.1]{MOTIVEMOTIVEMOTIVE}, one can show that the approximation order 
\[
[\Quot^d(\OO_{\BA^n}^{\oplus r})_0]-[\LQuot^d(\OO_{\BA^\infty}^{\oplus r})]\in \BL^{n+r-d+1}K_0(\Var_{\BC})
\]
in the proof of \Cref{main-A} is sharp. Indeed, the    Hilbert--Samuel stratum   $H^{2,1}_{(1,1,1)}$ is isomorphic to the total space of a line bundle on $\BP^1$. Hence, its motive is divisible by $ \BL$ but not by  $\BL^2$.
\end{remark}

\section{Cohomological stabilisation}
\label{sec:cohomology}

Let $Y$ be a complex algebraic variety. Its \emph{Hodge--Deligne polynomial} is
\[
\mathsf E(Y;u,v) = \sum_{p,q,i}(-1)^i h^{p,q}(\HH^i_c(Y,\BQ))u^pv^q.
\]
Sending $[Y] \mapsto E(Y;u,v)$ is a motivic measure $K_0(\Var_{\BC}) \to \BZ[u,v]$, see \cite{Danilov-Khovanskij}.
The \emph{weight polynomial}, namely the specialisation
\[
\mathsf w(Y,z) = \mathsf E(Y;z,z) = \sum_{p,q,i}(-1)^i h^{p,q}(\HH^i_c(Y,\BQ))z^{p+q} \,\in\,\BZ[z],
\]
coincides with the \emph{signed Poincar\'e polynomial}
\[
P(Y,-z) = \sum_{i \geq 0}\dim_{\BQ} \HH^i(Y,\BQ)(-z)^i
\]
when $Y$ is smooth and projective, by the Hodge decomposition theorem. We define the \emph{Poincar\'e series} $P(Y,z)$ of an arbitrary topological space $Y$ to be the (possibly infinite) sum $\sum_{i \geq 0}\dim_{\BQ} \HH^i(Y,\BQ)z^i$. This will be used in \Cref{cor:poincare}.
 
\begin{theorem}
\label{thm:purity}
Fix integers $d,r\geq 1$. Deligne's mixed Hodge structure on the cohomology $\HH^k(\LQuot^d(\OO_{\BA^n}^{\oplus r}), \BQ)$ is pure of Tate type for all $k$. The integral cohomology $\HH^k(\LQuot^d(\OO_{\BA^n}^{\oplus r}), \BZ)$ is torsion-free. In particular, there is an identity
\begin{equation}
\label{eqn:poincarepoly}
P( \LQuot^d(\OO_{\BA^n}^{\oplus r}), z)=\sum_{i=1}^dP(\Gr(i,r), z)P(\Gr(d-i,in), z)z^{2(d-i)(r-i)}.
\end{equation}
\end{theorem}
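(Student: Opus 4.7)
The plan is to use the Hilbert--Samuel stratification $\LQuot^d(\OO_{\BA^n}^{\oplus r}) = \coprod_{i=1}^{d} H_{(i,d-i)}^{n,r}$ from \eqref{eqn:strata-linear-quot} together with \Cref{prop: linear locus}, which exhibits $H_{(i,d-i)}^{n,r}$ as a rank $(d-i)(r-i)$ vector bundle over $\CH_{(i,d-i)}$, itself a Grassmann bundle over $\Gr(i,r)$ with fiber $\Gr(d-i, in)$. In particular, $\CH_{(i,d-i)}$ is smooth projective and admits an algebraic cell decomposition by pulling back the Schubert decomposition of $\Gr(i,r)$ and inserting the Schubert cells of the Grassmann fiber; hence its integral cohomology is torsion-free and concentrated in even degrees, and its rational Hodge structure is pure of Tate type. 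The Thom isomorphism for the vector bundle $H_{(i,d-i)}^{n,r} \to \CH_{(i,d-i)}$ then gives $\HH^k_c(H_{(i,d-i)}^{n,r}, \BZ) \cong \HH^{k-2(d-i)(r-i)}(\CH_{(i,d-i)}, \BZ)$, so the compactly supported integral cohomology of each stratum is torsion-free, supported in even degrees, and rationally pure of weight $k$ in degree $k$.

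The main step is to propagate these properties through the stratification. Order the (finitely many) strata by a linear refinement of the closure order, and inductively adjoin the new open stratum $U$ to the closed subvariety $C$ built so far, writing $Z = C \sqcup U$. In the long exact sequence
\[
\cdots \longrightarrow \HH^k_c(U,\BZ) \longrightarrow \HH^{k+1}_c(C,\BZ) \longrightarrow \HH^{k+1}_c(Z,\BZ) \longrightarrow \HH^{k+1}_c(U,\BZ) \longrightarrow \cdots
\]
every connecting homomorphism vanishes: $\HH^k_c(U,\BZ)$ is zero for $k$ odd by the first paragraph, while $\HH^{k+1}_c(C,\BZ)$ is zero for $k$ even by the inductive hypothesis. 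Hence the sequence splits into short exact sequences of free abelian groups concentrated in even degrees, and the middle term inherits both properties. Iterating yields
\[
\HH^\ast_c(\LQuot^d(\OO_{\BA^n}^{\oplus r}), \BZ) \;\cong\; \bigoplus_{i=1}^{d} \HH^{\ast - 2(d-i)(r-i)}(\CH_{(i,d-i)}, \BZ).
\]
Running the identical argument with rational coefficients and tracking Hodge structures -- using that a morphism of pure Hodge structures of different weights is zero -- shows that $\HH^k_c(\LQuot^d(\OO_{\BA^n}^{\oplus r}), \BQ)$ is pure of weight $k$ and of Tate type for every $k$.

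Since $\LQuot^d(\OO_{\BA^n}^{\oplus r})$ is closed in the projective scheme $\Quot^d(\OO_{\BA^n}^{\oplus r})_0$ it is itself projective, so $\HH^\ast = \HH^\ast_c$, and the purity and torsion-freeness claims for ordinary cohomology follow. The Poincaré polynomial identity \eqref{eqn:poincarepoly} is then obtained by summing the stratum contributions and invoking Leray--Hirsch for the Grassmann bundle $\CH_{(i,d-i)} \to \Gr(i,r)$, which gives $P(\CH_{(i,d-i)}, z) = P(\Gr(i,r), z)\,P(\Gr(d-i, in), z)$. The argument is essentially parity-driven, so I do not expect a serious obstacle: the only item requiring care is checking that the even-degree concentration of $\HH^\ast(\CH_{(i,d-i)}, \BZ)$ holds, which I would establish from the cell decomposition explicitly, and ensuring the inductive step preserves both torsion-freeness and even-degree concentration (which it does precisely because the connecting maps are forced to vanish by parity).
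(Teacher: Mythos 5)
Your proposal is correct and follows essentially the same route as the paper: both proofs use the Hilbert--Samuel stratification $\LQuot^d = \coprod_{i} H_{(i,d-i)}$, invoke \Cref{prop: linear locus} to exhibit each stratum as a vector bundle over a Grassmann bundle over a Grassmannian, and then propagate purity, Tate type, and torsion-freeness from strata to the whole space. The difference is one of packaging. The paper's route is to exhibit an explicit \emph{affine paving} of $\LQuot^d$, first constructing a filtration $L_0 \subset \cdots \subset L_{d-1} = \LQuot^d$ by closed subsets (closedness is established via upper semi-continuity of the minimal number of generators $h_z(0)$) and then interleaving affine pavings of each vector bundle over a Grassmann bundle; the affine paving is a single structural claim that simultaneously yields torsion-freeness, even-degree concentration, and Tate purity. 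Your route instead computes $\HH^\ast_c$ of each stratum via the Thom isomorphism, then propagates the three properties by the parity-driven vanishing of connecting maps in the long exact sequence of compactly supported cohomology, and finally uses Leray--Hirsch where the paper uses the $E$-polynomial motivic measure applied to \eqref{eqn: motive linear quot}. Both are valid; yours is slightly more homological, the paper's slightly more geometric.

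One small point you gloss over is the assertion that the strata can be linearly ordered compatibly with closure so that the successive unions are closed. This is exactly where the paper does real work: it shows the loci $L_i = \{z : h_z(0) \geq d-i\}$ are closed by upper semi-continuity of $s \mapsto \dim_\BC \CF_s/\Fm\cdot\CF_s$ in a flat family, so that $H_{(d,0)} \subset H_{(d,0)} \amalg H_{(d-1,1)} \subset \cdots$ is a bona fide filtration by closed subschemes. Your phrase ``linear refinement of the closure order'' presumes this; for your induction $Z = C \sqcup U$ to be legitimate you should justify, as the paper does, why the union of strata already accumulated is closed at each step.
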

\begin{proof}
Set $L=\LQuot^d(\OO_{\BA^n}^{\oplus r})$. The conditions of being pure of Tate type and torsion-freeness follow at once from the stronger assertion that $L$ has an affine paving, i.e.~an increasing filtration of closed subschemes
\[
\emptyset = X_{-1} \subset X_0 \subset X_1 \subset \cdots \subset X_\ell = L
\]
such that $X_i \setminus X_{i-1}$ is an affine space. We start from a filtration
\[
\emptyset = L_{-1} \subset L_0 \subset L_1 \subset \cdots \subset L_{d-1} = L
\]
where $L_i \subset L$ is defined as
\[
L_i = \Set{z \in L | h_z(0) \geq d-i} \subset L.
\]
But $h_z(0)$ coincides with the minimal number of generators of the quotient module determined by $z$. Given a flat family $\CF$ of 0-dimensional sheaves (supported at the origin) on a base scheme $S$, the function $S \to \BN$ taking $s$ to the minimal number of generators of $\CF_s/\Fm\cdot \CF_s$ is upper semi-continuous: this says that the loci $L_i$ are closed. We endow them with the reduced subscheme structure. We have locally closed stratifications for each $L_i$, given by
\begin{align*}
    L_{0} &= H_{(d,0)} = \Gr(d,r) \\
    L_1 &= H_{(d,0)} \amalg H_{(d-1,1)} \\
    \vdots & \\
    L_{d-1} &= H_{(d,0)} \amalg H_{(d-1,1)} \amalg \cdots \amalg H_{(1,d-1)} =L.
\end{align*}
We are reduced to exhibiting an affine paving for each $H_{(d-i,i)}$. Indeed, if 
\[
\emptyset = Y_{-1}^{(i)} \subset Y_{0}^{(i)} \subset \cdots \subset Y_{s_i-1}^{(i)} \subset Y_{s_i}^{(i)} = H_{(d-i,i)}
\]
is an affine paving, in the $i$th step we have
\[
\cdots \subset L_i \subset L_i \amalg Y_0^{(i+1)} \subset L_i \amalg Y_1^{(i+1)} \subset \cdots \subset L_i \amalg Y_{s_{i+1}-1}^{(i+1)} \subset L_i \amalg Y_{s_{i+1}}^{(i+1)} = L_{i+1}\subset \cdots
\]

To confirm that each $H_{(d-i,i)}$ has an affine paving, we exploit \Cref{prop: linear locus}. There, we proved that $H_{(d-i,i)}$ is a vector bundle over a Grassmann bundle $\CH_{(d-i,i)} \to \Gr(d-i,r)$. It is straightforward to check that any such structure admits an affine paving, see \cite{Ful}.
 
Finally, to prove \eqref{eqn:poincarepoly}, we observe that by purity the weight polynomial of $\LQuot^d(\OO_{\BA^n}^{\oplus r})$, which by \eqref{eqn: motive linear quot} coincides with the right hand side of the equation, agrees with the Poincar\'e polynomial. 
\end{proof}

The next result identifies the cohomology of the full Quot scheme with the cohomology of the punctual Quot scheme, and will be used in the proof of \Cref{cor:poincare} below.

\begin{lemma}
\label{prop: totaro}
Let $d, n, r\geq 1$. Then  the natural inclusion 
    \[
    \begin{tikzcd}
    \Quot^d(\OO^{\oplus r}_{\BA^n})_0\arrow[hook]{r} & \Quot^d(\OO_{\BA^n}^{\oplus r})
    \end{tikzcd}
    \]
    is a homotopy equivalence in the classical topology. In particular, there is a graded ring isomorphism
\begin{align*}
\HH^*\left(\Quot^d(\OO^{\oplus r}_{\BA^n}), \BZ\right) \cong     \HH^*\left(\Quot^d(\OO^{\oplus r}_{\BA^n})_0, \BZ\right).
    \end{align*}
\end{lemma}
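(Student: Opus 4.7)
The plan is to exploit the diagonal $\BG_{\mathrm{m}}$-action on $\BA^n$ by scaling (all weights positive, hence contracting to the origin) and its lift to $Q=\Quot^d(\OO^{\oplus r}_{\BA^n})$, in order to build a deformation retraction of $Q$ onto $Q_0 \defeq \Quot^d(\OO^{\oplus r}_{\BA^n})_0$ in the analytic topology; the ring isomorphism on cohomology will then follow by homotopy invariance of singular cohomology.

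The crucial step is to extend the $\BG_{\mathrm{m}}$-action on $Q$ to a morphism of schemes $\widetilde\mu\colon \BA^1\times Q\to Q$ whose value at $t=0$ lands inside $Q^{\BG_{\mathrm{m}}}\subseteq Q_0$. I would pull back the universal quotient $\OO^{\oplus r}_{Q\times\BA^n}\onto \CF$ along the action morphism $\BG_{\mathrm{m}}\times Q\times \BA^n\to Q\times \BA^n$, producing a $\BG_{\mathrm{m}}$-family of length-$d$ quotients whose supports shrink uniformly to the origin as $t\to 0$. Because those supports stay bounded in $\BA^n$, the flat limit at $t=0$ exists as a length-$d$ quotient supported at the origin (concretely, the associated graded of the $\Fm$-adic filtration, obtained via the Rees construction), and is automatically $\BG_{\mathrm{m}}$-homogeneous, hence a closed point of $Q^{\BG_{\mathrm{m}}}\subseteq Q_0$. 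Representability of $\Quot^d$ then converts the resulting flat family into the desired morphism $\widetilde\mu$; this is precisely the contracting extension underlying the generalised Białynicki--Birula decomposition recalled in Section~2, cf.~\cite{JJ_dec}.

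Once $\widetilde\mu$ is in hand, passing to the analytic topology and restricting to the real interval $[0,1]\subset \BC$ yields a continuous homotopy $H\colon [0,1]\times Q\to Q$ with $H(1,p)=p$ and $r(p)\defeq H(0,p)\in Q_0$ for every $p\in Q$. Then $H$ witnesses $i\circ r\sim \id_Q$, where $i\colon Q_0\into Q$ is the inclusion, and $\BG_{\mathrm{m}}$-invariance of $Q_0$ ensures $H$ restricts to a homotopy $[0,1]\times Q_0\to Q_0$ between $\id_{Q_0}$ and $r\circ i$. Hence $i$ is a homotopy equivalence, inducing the claimed graded ring isomorphism on integral cohomology. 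The main technical obstacle is the first step: rigorously producing $\widetilde\mu$ as a morphism of schemes, which reduces to checking that the flat limit lives inside $Q$ itself and that the total family is flat over $\BA^1\times Q$ --- both consequences of boundedness of supports combined with projectivity of $Q_0$, or equivalently of representability of the attractor functor as in \cite{JJ_dec}. Past this point the argument is a formal manipulation of continuous maps.
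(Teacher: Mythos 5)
Your proposal is correct and follows the same underlying mechanism as the paper. The paper's proof of this lemma is a one-line citation of Totaro's theorem for Hilbert schemes (\cite{Totaro-Hilb-homotopic}, Thm.~2.2 and Cor.~2.3) together with the remark that the generalisation to Quot schemes is straightforward; your argument is precisely that straightforward generalisation, spelled out. The $\BG_{\mathrm{m}}$-contraction, the extension $\widetilde\mu\colon\BA^1\times Q\to Q$ obtained from representability of the attractor (or, equivalently, from the valuative criterion after embedding $Q$ as an open in the projective Quot scheme of $\BP^n$), and the restriction to $[0,1]$ to produce a deformation retraction onto $Q_0$ is exactly what Totaro's theorem formalises.

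One small inaccuracy worth flagging, though it does not affect the argument: the flat limit at $t=0$ of a point $[K]\in Q$ is \emph{not} the associated graded with respect to the $\Fm$-adic filtration of the quotient module unless the quotient is already supported at the origin. For a quotient supported away from $0$, the $\Fm$-adic filtration stabilises and its associated graded vanishes, whereas the flat limit is a nontrivial length-$d$ homogeneous module at $0$ (e.g.\ the limit of $\BC[x]/(x-t)(x-2t)$ is $\BC[x]/(x^2)$, which is not $\mathrm{gr}_{\Fm}\bigl(\BC[x]/(x-1)(x-2)\bigr)=0$). The correct statement is just that the flat limit exists, is $\BG_{\mathrm{m}}$-homogeneous, and is supported at the origin — which is all you use.
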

\begin{proof}
The proof is a straightforward generalisation of  \cite[Cor.~2.3]{Totaro-Hilb-homotopic}, which follows as a corollary of  \cite[Thm.~2.2]{Totaro-Hilb-homotopic}.
\end{proof}

\begin{theorem}
\label{cor:poincare}
Fix integers $d,r, n\geq 1$.    The restriction  
\[
\begin{tikzcd}
    \HH^*\left(\Quot^d(\OO^{\oplus r}_{\BA^n}), \BZ\right)\arrow[r] &\HH^*\left(\LQuot^d(\OO^{\oplus r}_{\BA^n}), \BZ\right)
\end{tikzcd}
\]
is an isomorphism in degrees at most $2\left( n+r-d\right)$. In particular, 
there is a graded isomorphism of cohomology rings 
\[
\HH^*\left(\Quot^d(\OO^{\oplus r}_{\BA^\infty}), \BZ\right)\cong \HH^*\left(\LQuot^d(\OO^{\oplus r}_{\BA^\infty}), \BZ\right),
\]
and an identity
\[
P(\Quot^d(\OO^{\oplus r}_{\BA^\infty}), z) = \left(\prod_{k=1}^{d-1}\frac{1}{1-z^{2k}}\right)\cdot \sum_{i=0}^{r-1}z^{2di}.
\]
\end{theorem}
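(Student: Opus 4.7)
The plan is to reduce the statement to a vanishing of compactly supported cohomology on the complement of $\LQuot^d(\OO^{\oplus r}_{\BA^n})$ inside $\Quot^d(\OO^{\oplus r}_{\BA^n})_0$, and then to control this complement stratum by stratum using \Cref{prop: non linear pieces}. By \Cref{prop: totaro}, the inclusion $\Quot^d(\OO^{\oplus r}_{\BA^n})_0\hookrightarrow \Quot^d(\OO^{\oplus r}_{\BA^n})$ is a homotopy equivalence, so the restriction map in the statement is identified with $\HH^*(\Quot^d(\OO^{\oplus r}_{\BA^n})_0,\BZ)\to \HH^*(\LQuot^d(\OO^{\oplus r}_{\BA^n}),\BZ)$. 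Since $\Quot^d(\OO^{\oplus r}_{\BA^n})_0$ is projective and $\LQuot^d(\OO^{\oplus r}_{\BA^n})$ is closed in it, the long exact sequence of the pair, combined with the excision identification $\HH^k(\Quot^d_0, \LQuot^d) \cong \HH^k_c(C)$ for the open complement $C=\coprod_{\ell(h)\geq 2}H_h^{n,r}$, reduces the desired isomorphism in degrees $\leq 2(n+r-d)$ to the single vanishing $\HH^k_c(C,\BZ) = 0$ for all $k \leq 2(n+r-d)+1$.

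To obtain this vanishing, I would further stratify $C$ by the non-linear Hilbert--Samuel strata and iterate the long exact sequence in compactly supported cohomology, reducing to $\HH^k_c(H_h^{n,r},\BZ) = 0$ in the same range for each $h$ with $\ell(h)\geq 2$ and $\lvert h\rvert=d$. \Cref{prop: non linear pieces} furnishes, about every point of $H_h^{n,r}$, an open neighbourhood $U_K$ equipped with a bijective morphism $W_K\times \BA^{N_h}\to U_K$, where $N_h = (r-h(0)+h(0)n-h(1))h(t)$. An elementary estimate, using $h(0),h(t)\geq 1$ and $h(1)\leq d-h(0)-h(t)\leq d-2$, yields $N_h\geq n+r-d+1$---exactly the cohomological counterpart of the motivic divisibility powering the proof of \Cref{main-A}. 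Granting that a bijective morphism between reduced complex varieties induces an iso on singular cohomology, one gets $\HH^k_c(U_K,\BZ)\cong \HH^{k-2N_h}_c(W_K,\BZ)$, which vanishes for $k<2N_h$; a locally closed refinement of the cover $\{U_K\}$ (or alternatively a \v{C}ech-type spectral sequence argument) then globalises this local vanishing to $\HH^k_c(H_h^{n,r},\BZ)=0$ throughout the full range $k\leq 2(n+r-d)+1$.

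For the statements at infinity, passing $n\to\infty$ through the defining inverse system of the ind-scheme's cohomology, the graded ring isomorphism $\HH^*(\Quot^d(\OO^{\oplus r}_{\BA^\infty}),\BZ)\cong \HH^*(\LQuot^d(\OO^{\oplus r}_{\BA^\infty}),\BZ)$ is immediate, since in any fixed degree $k$ the finite-level restriction is an iso as soon as $n\geq k/2+d-r$. For the closed form of the Poincar\'e series, I would begin from the formula of \Cref{thm:purity} for $P(\LQuot^d(\OO^{\oplus r}_{\BA^n}),z)$, pass $n\to\infty$ via $P(\Gr(d-i,in),z)\to\prod_{k=1}^{d-i}(1-z^{2k})^{-1}$ (the Poincar\'e analogue of \Cref{lemma:infinite-grass-motive}, valid because these Grassmannians have pure cohomology), and collapse the resulting sum through the specialisation $\BL\mapsto z^2$ of the identity of \Cref{lemma: prelim}, recovering the stated product $\prod_{k=1}^{d-1}(1-z^{2k})^{-1}\cdot\sum_{i=0}^{r-1}z^{2di}$. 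The step I expect to require most care is upgrading the bare bijective morphism of \Cref{prop: non linear pieces} to a cohomology isomorphism: in characteristic zero such a morphism is radicial, and either projectivity of the ambient Białynicki-Birula pieces out of which $H_h$ is cut, or direct inspection of the explicit $\psi_\mu$ construction, should promote it to a universal homeomorphism on analytic topologies, which is all that is needed here.
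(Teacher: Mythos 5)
Your proof follows essentially the same route as the paper's: reduce to the punctual Quot scheme by \Cref{prop: totaro}, run the long exact sequence in compactly supported cohomology for the closed embedding $\LQuot^d(\OO_{\BA^n}^{\oplus r})\hookrightarrow \Quot^d(\OO_{\BA^n}^{\oplus r})_0$, and obtain the vanishing of $\HH^i_c$ of the complement in low degrees from \Cref{prop: non linear pieces}, then pass to the colimit for the infinite statement and compute the Poincar\'e series via the substitution $\BL\mapsto z^2$ in the motivic identity. Your estimate $N_h = (r-h(0)+h(0)n-h(1))h(t)\geq n+r-d+1$ is correct: $h(1)\leq d-h(0)-h(t)$ gives $r-h(0)+h(0)n-h(1)\geq r+h(0)n-d+h(t)\geq r+n-d+1$ since $h(0),h(t)\geq 1$, and multiplying by $h(t)\geq 1$ does the rest. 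The degree count is also consistent with the theorem: vanishing of $\HH^i_c(U_n)$ for $i<2(n+r-d+1)$ gives an isomorphism in degrees $\leq 2(n+r-d)$, as required.

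You rightly flag the two points that the paper's terse proof glosses over: (a) that a bijective morphism of complex varieties induces an isomorphism on compactly supported cohomology, and (b) how to globalise the local vanishing on the open sets $U_K$ to all of $H_h^{n,r}$. For (a) the cleanest route is not to aim for a universal homeomorphism in the analytic topology (that can fail for non-proper bijective morphisms of varieties, even in characteristic zero), but rather to argue by Noetherian induction: any morphism is finite (hence proper, hence an analytic homeomorphism onto its image) over a dense open of the target, and scissor relations together with the five lemma on the compactly supported LES push the isomorphism through the complement strata. For (b), passing from a finite open subcover $\{U_{K_j}\}$ to the locally closed refinement $V_j=U_{K_j}\setminus\bigcup_{i<j}U_{K_i}$ gives strata that sit inside some $W_{K_j}\times \BA^{N_h}$, but you must argue that each $V_j$ still factors off an $\BA^{N_h}$ (or at least that $\BL^{N_h}$ divides $[V_j]$); this should follow from the observation that the bijection $\psi_\mu$ of \Cref{prop: non linear pieces} is compatible on overlaps, so the $\BA^{N_h}$-direction is globally defined on $H_h$, but spelling this out is worth a sentence. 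These are exactly the details the paper elides under "by \Cref{prop: non linear pieces}" and "a standard stratification argument", so your proof is in line with theirs and your caution is well-placed.
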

\begin{proof}
Consider the closed embedding 
\[ 
\begin{tikzcd}
\LQuot^d(\OO_{\BA^n}^{\oplus r})\arrow[hook]{r} &   \Quot^d(\OO_{\BA^n}^{\oplus r})_0
\end{tikzcd}
\]
    of the linear locus in the punctual Quot scheme, and set its complement to be  \[U_n= \Quot^d(\OO_{\BA^n}^{\oplus r})_0\setminus  \LQuot^d(\OO_{\BA^n}^{\oplus r}).\]
    The long exact sequence in compactly supported cohomology yields 
\[
\begin{tikzcd}
0\arrow[r]
    &  \HH_c^0(U_n, \BZ) \arrow[r]
    & \HH^0\left( \Quot^d(\OO_{\BA^n}^{\oplus r})_0, \BZ\right) \arrow[r]
        \arrow[d, phantom, ""{coordinate, name=Z}]
      & \HH^0\left( \LQuot^d(\OO_{\BA^n}^{\oplus r}), \BZ\right) \arrow[dll,rounded corners,
to path={ -- ([xshift=2ex]\tikztostart.east)
|- (Z) [near end]\tikztonodes
-| ([xshift=-2ex]\tikztotarget.west) -- (\tikztotarget)}] \\
& \HH_c^1(U_n, \BZ) \arrow[r]
    & \HH^1\left( \Quot^d(\OO_{\BA^n}^{\oplus r})_0, \BZ\right) \arrow[r]
& \HH^1\left( \LQuot^d(\OO_{\BA^n}^{\oplus r}), \BZ\right) \arrow[r] & \cdots
\end{tikzcd}
\]
where we used that $ \Quot^d(\OO_{\BA^n}^{\oplus r})_0$ and $  \LQuot^d(\OO_{\BA^n}^{\oplus r})$ are compact in the classical topology. By \Cref{prop: non linear pieces}, we have the vanishing
    \begin{align*}
        \HH^{i}_c(U_n, \BZ)=0, \quad \mbox{for } \, i<2( n+r-d+1),
    \end{align*}
which by the long exact sequence above yields isomorphisms
\begin{align*}
\HH^{i}\left( \Quot^d(\OO_{\BA^n}^{\oplus r})_0, \BZ\right) \cong \HH^i\left( \LQuot^d(\OO_{\BA^n}^{\oplus r}), \BZ\right), \quad \mbox{for } \, i\leq 2\left( n+r-d+1\right)-2,
\end{align*}
which in turn induce a graded ring isomorphism on the colimit
    \begin{align*}
          \HH^{*}\left( \Quot^d(\OO_{\BA^\infty}^{\oplus r})_0, \BZ\right) \cong \HH^*\left( \LQuot^d(\OO_{\BA^\infty}^{\oplus r}), \BZ\right).
    \end{align*}
Combined with \Cref{prop: totaro}, this settles the first claim.
    
    Finally, the Poincaré series  $P(\LQuot^d(\OO^{\oplus r}_{\BA^\infty}), z) $ is computed analogously to \eqref{eqn: limit on motives} by replacing $\BL$  with $z^2$. 
\end{proof}

\section{The nested Hilbert scheme case}
\label{sec:nested-hilb}
In this section we show how our techniques apply to the nested setting, in the case $r=1$. Here the main character for the motivic stabilisation of the punctual nested Hilbert scheme 
\[
\Hilb^{d_1+1,\ldots,d_\ell+1}(\BA^n)_0 = \Set{(Z_1,\ldots,Z_\ell)|Z_1\subset \cdots\subset Z_\ell,\,\Supp(Z_\ell) = 0}\subset \prod_{i=1}^\ell \Hilb^{d_i+1}(\BA^n)_0
\]
is the \emph{flag variety} $\Flag(d_1,\ldots,d_\ell,n)$, the smooth projective variety parametrising flags $V_1\subset \cdots \subset V_\ell$ of subspaces $V_i \subset \BC^n$ of increasing dimensions $d_i = \dim_{\BC} V_i$. A straightforward argument (see e.g.~\cite[Sec.~6.4]{monavari2024hyperquot}) allows one to compute the motive
\begin{equation}
\label{eqn:finite-flag-motive}
[\Flag(d_1,\ldots,d_\ell,n)] = \frac{\prod_{k=1}^{n}\,\left(\BL^k-1\right)}{\prod_{j=0}^{\ell}\prod_{k=1}^{d_{j+1}-d_{j}}\,\left(\BL^k-1\right)}\,\in\, K_0(\Var_\BC).
\end{equation}
Here, as customary, we have set $d_{\ell+1}=n$ and $d_0 = 0$.
Let us compute the limit
\[
[\Flag(d_1,\ldots,d_\ell,\infty)] = \lim_{n\to \infty}\,[\Flag(d_1,\ldots,d_\ell,n)]\,\in\,\widehat{K}_0(\Var_{\BC}).
\]
This will generalise the calculation in \Cref{lemma:infinite-grass-motive}. We first simplify terms in \Cref{eqn:finite-flag-motive} and isolate the dependence on $n$ by rewriting
\[
[\Flag(d_1,\ldots,d_\ell,n)] = \prod_{k=n-d_\ell+1}^n\,(\BL^k-1)\cdot \frac{1}{\prod_{j=0}^{\ell-1}\prod_{k=1}^{d_{j+1}-d_{j}}\,\left(\BL^k-1\right)}.
\]
In the limit, the first factor becomes simply $(-1)^{d_\ell}$. The denominator can be rewritten as
\[
\prod_{j=0}^{\ell-1}\prod_{k=1}^{d_{j+1}-d_{j}}\,\left(\BL^k-1\right) = (-1)^{\sum_{j=0}^{\ell-1}(d_{j+1}-d_{j})} \prod_{j=0}^{\ell-1}\,\prod_{k=1}^{d_{j+1}-d_{j}}\,\left(1-\BL^k\right).
\]
Adding the sign $(-1)^{d_\ell}$ back in gives a total contribution of $(-1)^{2d_\ell} = 1$, therefore
\begin{equation}
\label{eqn:infinite-flag-motive}
[\Flag(d_1,\ldots,d_\ell,\infty)] = \prod_{j=0}^{\ell-1}\,\prod_{k=1}^{d_{j+1}-d_{j}}\,\frac{1}{1-\BL^k} = \prod_{j=0}^{\ell-1}\,[\Gr(d_{j+1}-d_{j},\infty)].
\end{equation}

Our final goal is to prove the following nested analogue of the rank 1 version of \Cref{main-A}.

\begin{theorem}\label{thm:nested}
Fix integers $\ell\geq 1$ and $0\leq d_1\leq \cdots \leq d_\ell \leq n$. Then 
\begin{equation}
\label{nested-bounded}[\Hilb^{d_1+1,\ldots,d_\ell+1}(\BA^n)_0] =[\Flag(d_1,\ldots, d_\ell,n)] \in K_0(\Var_\BC)/\BL^{n-d_\ell+1}K_0(\Var_\BC).
\end{equation}
In particular, the  sequence of motives $([\Hilb^{d_1+1,\ldots,d_\ell+1}(\BA^n)_0])_n$ converges in $ \widehat{K}_0(\Var_\BC)$ and in $\widehat{K}_0(\Var_\BC)\llbracket t \rrbracket$ we have the identity
\begin{align*}
\sum_{0\leq {d_1\leq\cdots\leq d_\ell}}\,[\Hilb^{d_1+1,\ldots,d_\ell+1}(\BA^\infty)_0]t_1^{d_1}\cdots t_\ell^{d_\ell} &=\prod_{j=0}^{\ell-1}\prod_{i\geq 0}\frac{1}{1-\BL^it_{j+1}t_{j+2}\cdots t_\ell}. 
\end{align*}
\end{theorem}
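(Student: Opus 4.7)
The plan is to follow the blueprint of \Cref{main-A}, specialised to $r=1$ and adapted to the nested setting. First, introduce the \emph{linear locus} $L_n \subset \Hilb^{d_1+1,\ldots,d_\ell+1}(\BA^n)_0$ as the closed subset parametrising nested flags of ideals $(I_1, \ldots, I_\ell)$ such that $\Fm^2 \subset I_\ell$. The nesting $I_i \supset I_\ell$ for $i < \ell$ automatically forces $\Fm^2 \subset I_i$ at every level, so each $I_i$ is determined by its image $I_i/\Fm^2 \subset \Fm/\Fm^2 \cong \BC^n$, a subspace of codimension $d_i$, and the descending chain $I_1/\Fm^2 \supset \cdots \supset I_\ell/\Fm^2$ is a flag in $\BC^n$ of codimensions $d_1 \leq \cdots \leq d_\ell$. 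This produces a natural isomorphism $L_n \cong \Flag(d_1, \ldots, d_\ell, n)$.

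Next, bound the motive of the complement. The complement $\Hilb^{d_1+1,\ldots,d_\ell+1}(\BA^n)_0 \setminus L_n$ coincides with the preimage under the projection $\pi \colon (Z_1,\ldots,Z_\ell) \mapsto Z_\ell$ of the non-linear locus of $\Hilb^{d_\ell+1}(\BA^n)_0$, which stratifies as $\coprod_{h\colon \ell(h)\geq 2} H_h^{n,1}$. For each such Hilbert--Samuel function $h$ with $|h| = d_\ell+1$ and $h(0)=1$, \Cref{prop: non linear pieces} furnishes an open cover $\{U_K\}$ of $H_h^{n,1}$ and bijective morphisms $W_K \times \BA^{(n-h(1))h(t)} \to U_K$. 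Since $h(1) + \cdots + h(t) = d_\ell$ with $h(t) \geq 1$ and $t \geq 2$, one has $h(1) \leq d_\ell - 1$ and therefore $(n-h(1))h(t) \geq n - d_\ell + 1$, reproducing the key estimate used in the proof of \Cref{main-A}. The crucial point is that this local affine bundle structure lifts to $\pi^{-1}(U_K)$: by applying the $\BC$-linear perturbation $\psi_\mu$ of \eqref{eq:biho} coherently to a compatible system of standard bases for the entire flag, one realises $\pi^{-1}(U_K)$ as a bijective image of $W_K^{\mathrm{nest}} \times \BA^{(n-h(1))h(t)}$, from which $[\pi^{-1}(H_h^{n,1})] \in \BL^{n-d_\ell+1}K_0(\Var_\BC)$ follows.

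Summing the contributions of $L_n$ and its complement yields \eqref{nested-bounded}. Convergence in $\widehat{K}_0(\Var_\BC)$ then follows from the existence of the limit $[\Flag(d_1,\ldots,d_\ell,\infty)]$ computed in \eqref{eqn:infinite-flag-motive}. For the generating function identity, perform the substitution $e_j \defeq d_{j+1} - d_j$ for $j = 0, \ldots, \ell - 1$ (with $d_0 \defeq 0$); under this change of variables the monomial factorises as $t_1^{d_1} \cdots t_\ell^{d_\ell} = \prod_{j=0}^{\ell-1}(t_{j+1}\cdots t_\ell)^{e_j}$, and the multiple sum decouples into independent series of the form $\sum_{e\geq 0}[\Gr(e,\infty)]\,s^e = \prod_{i\geq 0}(1-\BL^i s)^{-1}$ (the remark following \Cref{lemma:infinite-grass-motive}), giving the desired product.

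The hardest step will be making the nested version of \Cref{prop: non linear pieces} rigorous. Since $\psi_\mu$ is only $\BC$-linear, its naive application to $R$-module generators of the intermediate ideals $K_1, \ldots, K_{\ell-1}$ does not obviously preserve the flag structure; the fix is to work with a simultaneous standard basis of the whole chain $K_1 \supset \cdots \supset K_\ell$, obtained by extending a standard basis of $K_\ell$ with additional generators at each level, and to verify that the perturbation induced by $\mu \in \Hom_\BC(\Theta_K, \Gamma_K)$ descends consistently along this filtration and leaves the nesting intact.
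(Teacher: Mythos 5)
Your proposal follows essentially the same blueprint as the paper: identify the nested linear locus with $\Flag(d_1,\ldots,d_\ell,n)$ via the observation that $\Fm^2\subset I_\ell$ forces $\Fm^2\subset I_i$ for all $i$, bound the complement by $\BL^{n-d_\ell+1}$ via a nested analogue of \Cref{prop: non linear pieces} with the estimate $(n-h(1))h(t)\geq n-d_\ell+1$, and then decouple the generating function through the substitution $d_{j+1}-d_j$. The paper itself merely invokes ``a similar argument to that of \Cref{prop: non linear pieces}'' for the nested divisibility bound, so your explicit acknowledgment that the simultaneous standard basis construction and its compatibility with the perturbation $\psi_\mu$ is the genuinely hard step is, if anything, more honest than the published text — it identifies the same deferred technical lemma.
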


The next definition relies on the standard fact that the theory of Hilbert--Samuel functions naturally extends to the nested setup, see e.g.~\cite{UPDATES,Graffeo-Lella-new-components} and \cite[Sec.~2.4]{GL:2steps}.

\begin{definition}
Consider the vector of Hilbert--Samuel functions $\boldit{h}=((1,d_i))_{i=1}^\ell$. In analogy with \Cref{def:linearquot} we define the \emph{linear locus} in $ \Hilb^{d_1+1,\ldots,d_\ell+1}(\BA^n)_0$ to be the closed subset
   \begin{equation}
   \label{eqn:nested-linear}
\LHilb^{d_1+1,\ldots,d_\ell+1}({\BA^n})=H_{\boldit{h}} \subset \Hilb^{d_1+1,\ldots,d_\ell+1}(\BA^n)_0.
   \end{equation}
\end{definition}
It is straightforward to observe that there is an isomorphism 
\begin{equation}
\label{iso-flag-linearlocus}
\LHilb^{d_1+1,\ldots,d_\ell+1}({\BA^n})\cong \Flag(d_1,\ldots,d_\ell,n).
\end{equation}

\begin{proofof}{\Cref{thm:nested}}
Let $\boldit{h}=(h_1\leq \cdots\leq h_\ell)$ be an $\ell$-tuple of Hilbert--Samuel functions such that $h_\ell$ has length $t\geq 2$ and $h_i(0)=1$, for $i=1,\ldots,\ell$. A similar argument to that of \Cref{prop: non linear pieces} shows that the motive of nonlinear locus in the nested Hilbert scheme, namely the complement of \eqref{eqn:nested-linear}, is divisible by $\BL^{(n-h_\ell(1))h_\ell(t)}$. Therefore, thanks to the isomorphism \eqref{iso-flag-linearlocus}, we obtain
\[
[\Hilb^{d_1+1,\ldots,d_\ell+1}(\BA^n)_0] -[\Flag(d_1,\ldots,d_\ell,n)] \,\in\, \BL^{n-d_\ell+1}K_0(\Var_{\BC}),
\]
which yields \eqref{nested-bounded} and in particular the key identity
\[
[\Hilb^{d_1+1,\ldots,d_\ell+1}(\BA^\infty)_0] = [\Flag(d_1,\ldots,d_\ell,\infty)]\,\in\,\widehat{K}_0(\Var_\BC).
\]
Set $m_j = d_{j+1}-d_j$ for $j = 0,\ldots,\ell-1$. Then $d_i = \sum_{0\leq j\leq i-1}m_j$ for all $i=1,\ldots,\ell$ (recall that $d_0=0$) and 
\[
t_1^{d_1}\cdots t_\ell^{d_\ell} = t_1^{m_0}t_2^{m_0+m_1}\cdots t_{\ell}^{m_0+\cdots+m_{\ell-1}} = \prod_{j=0}^{\ell-1}\,\left(t_{j+1} t_{j+2}\cdots t_\ell\right)^{m_j}.
\]
Therefore, setting $q_j = t_{j+1} t_{j+2}\cdots t_\ell$ and exploiting \Cref{eqn:infinite-flag-motive}, we can compute
\begin{align*}
\sum_{0\leq {d_1\leq\cdots\leq d_\ell}}\,[\Flag(d_1,\ldots,d_\ell,\infty)] t_1^{d_1}\cdots t_\ell^{d_\ell} 
&= \sum_{m_0,\ldots,m_{\ell-1}\geq 0} \prod_{j=0}^{\ell-1}\,\prod_{k=1}^{m_j}\,\frac{1}{1-\BL^k} q_j^{m_j} \\
&=\prod_{j=0}^{\ell-1}\sum_{m\geq 0}q_j^{m}\prod_{k=1}^{m}\,\frac{1}{1-\BL^k} \\
&=\prod_{j=0}^{\ell-1} \sum_{m\geq 0}[\Gr(m,\infty)]q_j^m \\
&=\prod_{j=0}^{\ell-1}\prod_{i\geq 0}\frac{1}{1-\BL^iq_j} \\
&=\prod_{j=0}^{\ell-1}\prod_{i\geq 0}\frac{1}{1-\BL^it_{j+1} t_{j+2}\cdots t_\ell}.\qedhere
\end{align*}
\end{proofof}

\begin{remark}
It is not immediate to formulate an analogue of \Cref{thm:nested} for nested Quot schemes, as there is no direct relation between the (nested) linear Quot scheme and a ``nice'' variety playing the role of the flag variety as in \eqref{iso-flag-linearlocus} for the case $r=1$. For instance, one can see that the morphism
\[
\begin{tikzcd}
\LQuot^{d_1+1,\ldots,d_\ell+1}(\OO_{\BA^n}^{\oplus r})\arrow[r]&\LQuot^{d_\ell+1}(\OO_{\BA^n}^{\oplus r})
\end{tikzcd}
\]
fails to be flat in general, unlike in the nested Hilbert scheme case.
\end{remark}

The following corollary generalises to the nested setup the isomorphism in cohomology obtained in \cite{Hilb^infinity} when $\ell=1$, without exploiting any  $\BA^1$-homotopy techniques.

\begin{corollary}
\label{cor:nested-cohomology}
Fix integers $\ell \geq 1$ and $0\leq d_1\leq \cdots d_\ell \leq n$. For each integer $i\leq 2(n-d_\ell+1)-2$ there is an isomorphism
\[
\HH^i(\Hilb^{d_1+1,\ldots,d_\ell+1}(\BA^n),\BZ) \cong \HH^i(\Flag(d_1,\ldots,d_\ell,n),\BZ).
\]
In particular, there is an isomorphism  of graded rings
\[
\HH^\ast (\Hilb^{d_1+1,\ldots,d_\ell+1}(\BA^\infty),\BZ) \cong \BZ\left[c^{(j)}_1,\ldots,c^{(j)}_{d_{j+1}-d_j}\,\big|\,0\leq j\leq \ell-1\right]
\]
where $\deg c_k^{(j)} = 2k$ for all $k$.
\end{corollary}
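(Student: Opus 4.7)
The strategy parallels the proof of \Cref{cor:poincare} in the Quot case. The three main ingredients are: (i) a nested analogue of Totaro's result \Cref{prop: totaro}, identifying the cohomology of $\Hilb^{d_1+1,\ldots,d_\ell+1}(\BA^n)$ with that of its punctual locus $\Hilb^{d_1+1,\ldots,d_\ell+1}(\BA^n)_0$; (ii) the nested version of \Cref{prop: non linear pieces} alluded to in the proof of \Cref{thm:nested}, providing a local affine-fibration structure on the nonlinear locus with fibre of dimension at least $n-d_\ell+1$; and (iii) the isomorphism \eqref{iso-flag-linearlocus}, which identifies the linear nested Hilbert scheme with the flag variety $\Flag(d_1,\ldots,d_\ell,n)$.

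Concretely, I would consider the open complement
\[
V_n = \Hilb^{d_1+1,\ldots,d_\ell+1}(\BA^n)_0 \setminus \LHilb^{d_1+1,\ldots,d_\ell+1}(\BA^n),
\]
stratify it by the possible tuples of Hilbert--Samuel functions, and apply the affine-paving/purity argument of \Cref{thm:purity} together with ingredient (ii) to deduce
\[
\HH^i_c(V_n,\BZ)=0 \quad\text{for all } i < 2(n-d_\ell+1).
\]
The long exact sequence in compactly supported cohomology for the pair $(\Hilb^{d_1+1,\ldots,d_\ell+1}(\BA^n)_0, V_n)$, combined with the compactness of the punctual nested Hilbert scheme, then yields an isomorphism
\[
\HH^i\bigl(\Hilb^{d_1+1,\ldots,d_\ell+1}(\BA^n)_0,\BZ\bigr) \cong \HH^i\bigl(\LHilb^{d_1+1,\ldots,d_\ell+1}(\BA^n),\BZ\bigr)
\]
in degrees $i \leq 2(n-d_\ell+1)-2$. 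Combining this with (i) and (iii) proves the first assertion.

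Taking the colimit $n \to \infty$ then produces a graded ring isomorphism
\[
\HH^\ast \bigl(\Hilb^{d_1+1,\ldots,d_\ell+1}(\BA^\infty),\BZ\bigr) \cong \HH^\ast \bigl(\Flag(d_1,\ldots,d_\ell,\infty),\BZ\bigr).
\]
To identify the right-hand side with the displayed polynomial ring, I would invoke the classical stable cohomology of the flag variety: the tautological filtration yields universal subquotient bundles $\CQ^{(j)}$ of rank $d_{j+1}-d_j$ for $j=0,\ldots,\ell-1$, and their Chern classes $c_k^{(j)} = c_k(\CQ^{(j)})$ freely generate the cohomology ring in the limit (the finite-level Giambelli-type relations disappear as $n-d_\ell\to \infty$, exactly as in \eqref{Grass-stab}; torsion-freeness comes from the Schubert cell decomposition).

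The main obstacle is to formulate the nested analogue of \Cref{prop: non linear pieces} precisely enough that each non-linear stratum of $V_n$ admits, locally, a bijective morphism from $W \times \BA^N$ with $N \geq n - d_\ell + 1$, and to verify that these local trivialisations glue into an affine paving whose compactly supported cohomology vanishes in the claimed range. Everything else should be bookkeeping built on the geometric input already established in \Cref{thm:nested}.
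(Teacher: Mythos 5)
Your proposal matches the paper's proof: the paper derives the first isomorphism from the nested upgrades of \Cref{prop: totaro} and \Cref{cor:poincare} (i.e.\ Totaro's homotopy equivalence to the punctual locus, plus the long exact sequence in compactly supported cohomology combined with the vanishing $\HH^i_c$ of the non-linear complement in low degrees furnished by the nested analogue of \Cref{prop: non linear pieces}), and then uses \eqref{iso-flag-linearlocus} and classical stable cohomology of flag varieties. Your invocation of the affine-paving argument from \Cref{thm:purity} in deducing the $\HH^i_c$-vanishing is a slight misattribution --- the paper gets that vanishing directly from \Cref{prop: non linear pieces}, not from \Cref{thm:purity} --- but this is cosmetic and does not change the substance of the argument.
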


\begin{proof}
The first statement follows from the nested upgrade of \Cref{prop: totaro} and \Cref{cor:poincare} and using the isomorphism \eqref{iso-flag-linearlocus}. The second statement is a direct consequence of the first.
\end{proof}

\bibliographystyle{amsplain-nodash}
\bibliography{The_Bible}

\ifx\undefined\bysame
\newcommand{\bysame}{\leavevmode\hbox to3em{\hrulefill}\,}
\fi
\begin{thebibliography}{10}

\bibitem{BBdecomposition}
Andrzej Bia{\l}ynicki-Birula, {\em Some theorems on actions of algebraic groups}, Ann. of Math. (2) {\bf 98} (1973), no.~3, 480--497.

\bibitem{8POINTS}
Dustin Cartwright, Daniel Erman, Mauricio Velasco, and Bianca Viray, {\em {Hilbert schemes of 8 points}}, Algebra \& Number Theory {\bf 3} (2009), no.~7, 763 -- 795.

\bibitem{cazzaniga2020higher}
Alberto Cazzaniga, Dimbinaina Ralaivaosaona, and Andrea~T. Ricolfi, {\em {Higher rank motivic Donaldson-Thomas invariants of $\mathbb{A}^3$ via wall-crossing, and asymptotics}}, Math. Proc. Cambridge Philos. Soc. (2022), 97--122.

\bibitem{Danilov-Khovanskij}
Vladimir~I. Danilov and Askold~G. Khovanskij, {\em Newton polyhedra and an algorithm for computing {Hodge}-{Deligne} numbers}, Math. USSR, Izv. {\bf 29} (1987), 279--298.

\bibitem{DavisonR}
Ben Davison and Andrea~T. Ricolfi, {\em {The local motivic DT/PT correspondence}}, J. Lond. Math. Soc. {\bf 104} (2021), no.~3, 1384--1432.

\bibitem{Hilb_11}
Theodosios Douvropoulos, Joachim Jelisiejew, Bernt Ivar~Utst{\o}l N{\o}dland, and Zach Teitler, {\em The {H}ilbert scheme of 11 points in {$\mathbb A^3$} is irreducible}, Combinatorial algebraic geometry, Fields Inst. Commun., vol.~80, Fields Inst. Res. Math. Sci., Toronto, ON, 2017, pp.~321--352.

\bibitem{EISENBUD}
David Eisenbud, {\em Commutative algebra. {With} a view toward algebraic geometry}, Grad. Texts Math., vol. 150, Berlin: Springer-Verlag, 1995.

\bibitem{Fantechi-Ricolfi-motivic}
Barbara Fantechi and Andrea~T. Ricolfi, {\em On the stack of 0-dimensional coherent sheaves: motivic aspects}, Bull. Lond. Math. Soc. {\bf 57} (2025), no.~6, 1607--1649.

\bibitem{FGLR}
Nadir Fasola, Michele Graffeo, Danilo Lewański, and Andrea~T. Ricolfi, {\em {Invariants of nested Hilbert and Quot schemes on surfaces}}, {\href{https://arxiv.org/abs/math/2503.14175}{ArXiv:2503.14175}}, 2025.

\bibitem{Ful}
William Fulton, {\em Intersection theory}, Springer, 1984.

\bibitem{UPDATES}
Franco Giovenzana, Luca Giovenzana, Michele Graffeo, and Paolo Lella, {\em Unexpected but recurrent phenomena for {Quot} and {Hilbert} schemes of points}, Rend. Semin. Mat., Univ. Politec. Torino {\bf 82} (2024), no.~1, 145--170.

\bibitem{GL:2steps}
Franco Giovenzana, Luca Giovenzana, Michele Graffeo, and Paolo Lella, {\em {New components of Hilbert schemes of points and 2-step ideals}}, {\href{https://arxiv.org/abs/2507.02789}{ArXiv:2507.02789}}, 2025.

\bibitem{Graffeo-Lella-new-components}
Michele Graffeo and Paolo Lella, {\em {Components of the nested Hilbert scheme of few points}}, \href{https://arxiv.org/abs/2601.16765}{ArXiv:2601.16765}, 2026.

\bibitem{double-nested-1}
Michele Graffeo, Paolo Lella, Sergej Monavari, Andrea~T. Ricolfi, and Alessio Sammartano, {\em The geometry of double nested {Hilbert} schemes of points on curves}, Trans. Am. Math. Soc. {\bf 378} (2025), no.~9, 6013--6047.

\bibitem{MOTIVEMOTIVEMOTIVE}
Michele Graffeo, Sergej Monavari, Riccardo Moschetti, and Andrea~T. Ricolfi, {\em {The motive of the Hilbert scheme of points in all dimensions}}, {\href{https://arxiv.org/abs/2406.14321}{ArXiv:2406.14321}}, 2024.

\bibitem{tandardBasis}
Gert-Martin Greuel and Gerhard Pfister, {\em Advances and improvements in the theory of standard bases and syzygies}, Arch. Math. (Basel) {\bf 66} (1996), no.~2, 163--176.

\bibitem{GLMps}
Sabir~M. {Gusein-Zade}, Ignacio {Luengo}, and Alejandro {Melle-Hern\'andez}, {\em {A power structure over the Grothendieck ring of varieties}}, {Math. Res. Lett.} {\bf 11} (2004), no.~1, 49--57.

\bibitem{GLMstacks}
Sabir~M. {Gusein-Zade}, Ignacio {Luengo}, and Alejandro {Melle-Hern\'andez}, {\em {On the pre-$\lambda $-ring structure on the Grothendieck ring of stacks and the power structures over it}}, {Bull. Lond. Math. Soc.} {\bf 45} (2013), no.~3, 520--528.

\bibitem{Hilb^infinity}
Marc Hoyois, Joachim Jelisiejew, Denis Nardin, Burt Totaro, and Maria Yakerson, {\em The {Hilbert} scheme of infinite affine space and algebraic {{\(\mathrm{K}\)}}-theory}, J. Eur. Math. Soc. (JEMS) {\bf 27} (2025), no.~5, 2037--2071.

\bibitem{ELEMENTARY}
Joachim Jelisiejew, {\em Elementary components of {H}ilbert schemes of points}, J. Lond. Math. Soc. (2) {\bf 100} (2019), no.~1, 249--272.

\bibitem{JJ_dec}
Joachim Jelisiejew and {\L}ukasz Sienkiewicz, {\em {Bia{{\l}}ynicki-Birula decomposition for reductive groups}}, J. Math. Pures Appl. (9) {\bf 131} (2019), 290--325.

\bibitem{Joachim-Sivic}
Joachim Jelisiejew and Klemen {\v{S}}ivic, {\em {Components and singularities of Quot schemes and varieties of commuting matrices}}, J. Reine Angew. Math. {\bf 788} (2022), 129--187.

\bibitem{Madsen-Weiss}
Ib~Madsen and Michael Weiss, {\em The stable moduli space of {Riemann} surfaces: {Mumford}'s conjecture}, Ann. Math. (2) {\bf 165} (2007), no.~3, 843--941.

\bibitem{MR_nested_Quot}
Sergej Monavari and Andrea~T. Ricolfi, {\em On the motive of the nested {Q}uot scheme of points on a curve}, J. Algebra {\bf 610} (2022), 99--118.

\bibitem{monavari2024hyperquot}
Sergej Monavari and Andrea~T. Ricolfi, {\em Hyperquot schemes on curves: virtual class and motivic invariants}, Math. Ann. {\bf 392} (2025), no.~2, 1665--1709.

\bibitem{mozgovoy2019motivic}
Sergey Mozgovoy, {\em {Motivic classes of Quot-schemes on surfaces}}, {\href{https://arxiv.org/abs/1911.07561}{ArXiv:1911.07561}}, 2019.

\bibitem{Mumford1983}
David Mumford, {\em Towards an enumerative geometry of the moduli space of curves}, 1983, pp.~271--328.

\bibitem{Pielasa}
Paweł Pielasa, {\em {Cohomology of the Quot scheme of an infinite affine space}}, \href{https://arxiv.org/abs/2511.10742}{ArXiv:2511.10742}, 2025.

\bibitem{ricolfi2019motive}
Andrea~T. Ricolfi, {\em {On the motive of the Quot scheme of finite quotients of a locally free sheaf}}, J. Math. Pures Appl. {\bf 144} (2020), 50--68.

\bibitem{Virtual_Quot}
Andrea~T. Ricolfi, {\em {Virtual classes and virtual motives of Quot schemes on threefolds}}, Adv. Math. {\bf 369} (2020), 107182.

\bibitem{Totaro-Hilb-homotopic}
Burt Totaro, {\em Torus actions, {Morse} homology, and the {Hilbert} scheme of points on affine space}, {\'E}pijournal de G{\'e}om. Alg{\'e}br., EPIGA {\bf 5} (2021), 14.

\end{thebibliography}

\smallskip
\noindent
{\small{Michele Graffeo \\
\address{SISSA, Via Bonomea 265, 34136, Trieste (Italy)}, and INFN (Sez.~Trieste), \href{mailto:mgraffeo@sissa.it}{\texttt{mgraffeo@sissa.it}}
}}

\smallskip
\noindent
{\small{Sergej Monavari \\
\address{Dipartimento di Matematica “Tullio Levi-Civita”, Università di Padova, Via Trieste 63, 35121 Padova (Italy)},
\href{mailto:sergej.monavari@math.unipd.it}{\texttt{sergej.monavari@math.unipd.it}}
}}

\smallskip
\noindent
{\small{Riccardo Moschetti,\\
\address{Università di Pavia, Via Ferrata 1, 27100, Pavia (Italy)},
\href{mailto:riccardo.moschetti@unipv.it}{\texttt{riccardo.moschetti@unipv.it}}
}}

\smallskip
\noindent
{\small Andrea T. Ricolfi \\
\address{SISSA, Via Bonomea 265, 34136, Trieste (Italy)},
\href{mailto:aricolfi@sissa.it}{\texttt{aricolfi@sissa.it}}}
\end{document}